\crefname{hypothesis}{Hypothesis}{Hypotheses}
\title{Optimal robustness of port-Hamiltonian systems}
\author{Volker Mehrmann\footnotemark[2]
\and Paul Van Dooren\footnotemark[4]}
\DeclareMathOperator{\diag}{diag}
\newcommand{\eg}{e.\,g.,\ }
\newcommand{\ie}{i.\,e.,\ }
\newcommand{\M}{\mathcal{M}}
\newcommand{\T}{\mathcal{T}}
\renewcommand{\S}{\mathcal{S}}
\newcommand{\Hn}{ {\mathbb{H}_n} }   
\newcommand{\XWpd}{ {\mathbb{X}^{\raisebox{0.2em}{{\fontsize{3}{2}\selectfont $>$}}}} }  
\newcommand{\XWpdpd}{ {\mathbb{X}^{\raisebox{0.2em}{{\fontsize{3}{2}\selectfont $\gg$}}}} }  
\DeclareMathOperator{\rank}{rank}
\newcommand {\matr}      [1] {\left[\begin{array}{#1}}
\newcommand {\rix}          {\end{array}\right]}
\begin{document}

\maketitle

\begin{abstract}
	We construct optimally robust port-Hamiltonian realizations of a given rational transfer function that represents a passive system. We show that the realization with a maximal passivity radius is a normalized port-Hamiltonian one. Its computation is linked to a particular solution of a linear matrix inequality that defines passivity of the transfer function, and we provide an algorithm to
construct this optimal solution.
We also consider the problem of finding the nearest passive system to a given non-passive one and provide a simple but suboptimal solution. 
\end{abstract}

\begin{keywords}
  Linear matrix inequality, port-Hamiltonian system, passivity, robustness
\end{keywords}
\footnotetext[2]{
	Institut f\"ur Mathematik MA 4-5, TU Berlin, Str.\ des 17.\ Juni 136,
	D-10623 Berlin, FRG.
	\url{mehrmann@math.tu-berlin.de}. Supported by {\it the German Federal Ministry of Education and Research BMBF within the project EiFer}
	and by {\it Deutsche Forschungsgemeinschaft},
through CRC TRR 154 'Mathematical Modelling, Simulation and Optimization using the Example of Gas Networks'.}
\footnotetext[4]{
	Department of Mathematical Engineering, Universit\'e catholique de Louvain, Louvain-La-Neuve, Belgium.
	\url{paul.vandooren@uclouvain.be}. Supported  by {\it Deutsche Forschungsgemeinschaft},
	through TRR 154 'Mathematical Modelling, Simulation and Optimization using the Example of Gas Networks'.
}
\begin{AMS}
  93D09, 93C05, 49M15, 37J25
\end{AMS}

\section{Introduction}

We consider realizations of linear dynamical systems that are denoted as \emph{positive-real or passive} and their associated transfer functions. In particular, we study positive real transfer functions  which play a fundamental role in systems and control theory: they represent \eg spectral density functions of
stochastic processes and arise in spectral factorizations.
Positive-real transfer functions form a convex set, and this property has lead to the extensive use of convex optimization techniques in this area \cite{BoyEFB94}.
\emph{Passive} systems and their relationships with \emph{positive-real transfer functions} are well studied, starting with the works  \cite{Kal63,Pop73,Wil72a,Wil72b} and the topic has recently received considerable attention in the context of \emph{port-Hamiltonian (pH) system models}, \cite{Sch04,SchJ14}.

In this paper we show that in the set of continuous-time pH realizations of positive-real transfer functions, there is a subset that achieves optimal robustness, in the sense that their \emph{passivity radius} is maximal. Considering the \emph{Laplace transform} of the linear time-invariant system
\begin{equation} \label{statespace_c}
 \begin{array}{rcl} \dot x & = & Ax + B u,\ x(0)=0,\\
y&=& Cx+Du,
\end{array}
\end{equation}
denoted as $\M:=\{A,B,C,D\}$, the transfer function is given by
\begin{equation}\label{tf}
\mathcal T(s)= D+C(sI-A)^{-1}B.
\end{equation}
Here $u:\mathbb R\to\mathbb{C}^m$,   $x:\mathbb R\to \mathbb{C}^n$,  and  $y:\mathbb R\to\mathbb{C}^m$  are vector-valued functions denoting, respectively, the \emph{input}, \emph{state},
and \emph{output} of the system. Denoting real and complex $n$-vectors ($n\times m$ matrices) by $\mathbb R^n$, $\mathbb C^{n}$ ($\mathbb R^{n \times m}$, $\mathbb{C}^{n \times m}$), respectively, the coefficient matrices satisfy $A\in \mathbb{C}^{n \times n}$,   $B\in \mathbb{C}^{n \times m}$, $C\in \mathbb{C}^{m \times n}$, and  $D\in \mathbb{C}^{m \times m}$.

We restrict ourselves to systems which are \emph{minimal}, \ie the pair $(A,B)$ is \emph{controllable}  (for all $s\in \mathbb C$, $\rank \mbox{\small $[\,s I-A \; | \; B\,]$} =n$), and the pair $(A,C)$ is \emph{observable} (\ie $(A^\mathsf{H},C^\mathsf{H})$ is controllable). Here, the Hermitian transpose and the transpose of a vector or matrix $V$ is denoted by
$V^{\mathsf{H}}$ and $V^{\mathsf{T}}$, respectively, and the identity matrix is denoted by $I_n$ or $I$ if the dimension is clear.
Throughout this article we will use the following notation.
We denote the set of Hermitian matrices in $\mathbb{C}^{n \times n}$ by $\Hn$.
Positive definiteness (semidefiniteness) of  $A\in \Hn$ is denoted by $A>0$ ($A\geq 0$).
The real and imaginary parts of a complex matrix $Z$ are written as $\Re (Z)$ and $\Im (Z)$, respectively, and $\imath$ is the imaginary unit.
We consider functions over $\Hn$, which is a vector space if considered as a \emph{real} subspace of $\mathbb{R}^{n\times n}+\imath \mathbb{R}^{n\times n}$.

The paper is organized as follows. In Section~\ref{sec:PH} we characterize the classes of passive systems and  of port-Hamiltonian (pH) systems. We then show in Section~\ref{sec:passrad} the relevance of pH systems in estimating the passivity radius of passive systems and construct in Section~\ref{sec:maxpass} realizations with optimal robustness margin for passivity. In Section \ref{sec:computing} we describe an algorithm to compute this optimal robustness margin.
In Sections \ref{sec:distance} and \ref{sec:stability} we show how to use these ideas to estimate the distance to the set of passive systems and to the set of stable systems.

\section{Passive systems and port-Hamiltonian realizations} \label{sec:PH}
The concepts of \emph{positive-realness} and \emph{passivity} are well studied. We briefly recall some important properties following \cite{Wil71}, and refer to the literature for a more detailed survey.
Consider a continuous-time system  as in (\ref{statespace_c})  and its transfer function $\mathcal T(s)$ as in \eqref{tf}. The transfer function $\mathcal T(s)$ is called {\em positive-real} \/if the
matrix-valued rational function
\[
\Phi(s):= \mathcal T^{\mathsf{H}}(-s) + \mathcal T(s)
\]
is positive semidefinite for $s$ on the imaginary axis, \ie $\Phi(\imath\omega)\geq 0$
for all $\omega\in \mathbb{R}$ and it is called \emph{strictly positive-real} if $\Phi(\imath \omega)>0 $ for all $\ \omega\in \mathbb{R}$.

For  $X \in \Hn$ and a system $\M=\{A,B,C,D\}$, we consider the matrix function
\begin{equation} \label{prls}
W(X,\M) := \left[
\begin{array}{cc}
 - A^{\mathsf{H}}X-X\,A & C^{\mathsf{H}} - X\,B \\
C- B^{\mathsf{H}}X & D^{\mathsf{H}}+D
\end{array}
\right],
\end{equation}
which we also denote by $W(X)$, when it is clear from the context which model we refer to.
If $\mathcal T(s)$ is  positive-real, then  there exists $X\in \Hn$ such that the \emph{linear matrix inequality (LMI)}
\begin{equation} \label{KYP-LMI}
W(X) \geq 0
\end{equation}
holds. In this context we will make frequent use of the sets
\begin{subequations}\label{LMIsolnsets}
\begin{align}
&\XWpd :=\left\{ X\in \Hn \left|   W(X) \geq 0,\ X >0 \right.\right\},
\label{XpdsolnWpsd} \\[1mm]
&\XWpdpd :=\left\{ X\in \Hn \left|   W(X) > 0,\ X >0 \right.\right\}. \label{XpdsolWpd}
\end{align}
\end{subequations}
A system $\M :=\left\{A,B,C,D\right\}$ is called \emph{passive} if there exists a state-dependent
\emph{storage function}, $\mathcal H(x)\geq 0$, such that for any $t_1,t_0\in \mathbb R$ with $t_1>t_0$,
 the \emph{dissipation inequality}
\begin{equation} \label{supply} \mathcal H(x(t_1))-\mathcal H(x(t_0)) \le \int_{t_0}^{t_1} \Re (y(t)^{\mathsf{H}}u(t)) \, dt
\end{equation}
holds.
If for all $t_1>t_0$, the inequality in \eqref{supply}
is strict then the system is  called \emph{strictly passive}.

If $D^{\mathsf{H}}+D$ is invertible, then
the minimum rank solutions of \eqref{KYP-LMI} in $\XWpd$
are those for which $\rank W(X) = \rank (D^{\mathsf{H}}+D)  = m$, which in turn is the case
if and only if the Schur complement of $D^{\mathsf{H}}+D$ in $W(X)$ is zero.  This Schur
complement is associated with the continuous-time \emph{algebraic Riccati equation (ARE)}
\begin{equation}
\mathsf{Ricc}(X) := -XA-A^{\mathsf{H}}X  -(C^{\mathsf{H}}-XB)(D^{\mathsf{H}}+D)^{-1}(C-B^{\mathsf{H}}X)=0.\label{riccatic}
\end{equation}
Solutions $X$ to (\ref{riccatic}) yield a spectral factorization of $\Phi(s)$, and each solution corresponds to an invariant subspace spanned by the columns of $U:=\matr{cc} I_n & -X^{\mathsf{T}} \rix^{\mathsf{T}}$
that remains invariant under multiplication with the \emph{Hamiltonian matrix}
\begin{equation}\label{HamMatrix}
H:=\matr{cc} A-B (D^{\mathsf{H}}+D)^{-1} C & - B (D^{\mathsf{H}}+D)^{-1} B^{\mathsf{H}} \\
C^{\mathsf{H}} (D^{\mathsf{H}}+D)^{-1} C & -(A-B (D^{\mathsf{H}}+D)^{-1} C)^{\mathsf{H}} \rix,
\end{equation}
\ie $U$ satisfies $HU=U A_F$ for a \emph{closed loop matrix} $A_{F}=A-BF$ with $F := (D^{\mathsf{H}}+D)^{-1}(C-B^{\mathsf{H}}X)$, see e.g., \cite{FreMX02}.

We can also associate with $\Phi$ a system pencil
\begin{equation} \label{statespace}
S(s) :=
\left[ \begin{array}{cc|c} 0 & A-sI_n & B \\
	A^{\mathsf{H}}+sI_n & 0 & C^{\mathsf{H}} \\ \hline B^{\mathsf{H}} & C & D^{\mathsf{H}}+D  \end{array} \right].
\end{equation}
Then the Schur complement of $S(s)$ is the transfer function
$\Phi(s)$ and the generalized eigenvalues of $S(s)$ are the zeros of $\Phi(s)$. The properties of the system can, however,  be checked in a much more numerically robust way using the pencil $S(s)$ rather than the matrix $H$. This is, in particular, true if $D^{\mathsf{H}}+D$ is singular or ill-conditioned, see \cite{BenLMV15} for a detailed analysis and appropriate algorithms.

A special class of realizations of passive systems is that of \emph{port-Hamiltonian systems}.
\begin{definition}\label{def:ph}
A linear time-invariant \emph{port-Hamiltonian (pH) system} has the state-space form
\begin{equation} \label{pH}
 \begin{array}{rcl} \dot x  & = & (J-R)Q x + (G-K) u,\\
y&=& (G+K)^{\mathsf{H}}Q x+(S+N)u,
\end{array}
\end{equation}
and the system matrices satisfy the symmetry conditions
\[
\mathcal V:= \left[ \begin{array}{cccc} J & G \\ -G^{\mathsf{H}} & N \end{array} \right]=-\mathcal V^{\mathsf{H}},\
\mathcal W:= \left[ \begin{array}{cccc} R & K \\ K^{\mathsf{H}} & S \end{array} \right] =\mathcal W^{\mathsf{H}}\geq 0, \  Q=Q^{\mathsf{H}} >0.
\]
\end{definition}
Port-Hamiltonian systems were introduced from a different point of view in \cite{Sch04}, but they also have a storage function and satisfy a dissipation inequality, and hence they are passive. Thus,  there must  be a coordinate transformation between a passive system and a representation ({\ref{pH}) as a pH system. We briefly recall the construction of such a possible transformation.

Consider a minimal state-space model  $\M:=\{A,B,C,D\}$ of a passive linear time-invariant system and let  $X\in \XWpd$ be a solution of the LMI \eqref{KYP-LMI}.
We then use a symmetric factorization $X= T^{\mathsf{H}}T$, which implies the invertibility of $T$, and define a new realization
\[
\{A_T,B_T,C_T,D_T\} := \{TAT^{-1}, TB, CT^{-1}, D \}
\]
so that
\begin{eqnarray}  \nonumber
&&\left[ \begin{array}{cccc} T^{-\mathsf{H}} & 0\\ 0 & I_m
\end{array}
\right]
\left[ \begin{array}{cccc} -A^{\mathsf{H}}X-XA & C^{\mathsf{H}}-XB \\ C-B^{\mathsf{H}}X & D^{\mathsf{H}}+D
\end{array}
\right]
\left[ \begin{array}{cccc} T^{-1} & 0\\ 0 & I_m
\end{array}
\right] \\
\label{PH}
&& \qquad =
\left[ \begin{array}{cccc}-A_T & -B_T \\ C_T & D_T
\end{array}
\right]+
\left[ \begin{array}{cccc} -A_T^{\mathsf{H}} & C^{\mathsf{H}}_T \\ -B^{\mathsf{H}}_T & D^{\mathsf{H}}_T
\end{array}
\right]
\geq 0.
\end{eqnarray}
We can then use the Hermitian and skew-Hermitian part of the matrix
\[
 \S := \left[ \begin{array}{cccc}-A_T & -B_T \\ C_T & D_T
\end{array}
\right]
\]
to define the coefficients of a pH representation  via
\[
 \left[ \begin{array}{cccc} R & K \\ K^{\mathsf{H}} & S
\end{array}
\right] := \frac{{\S} +{\S}^{\mathsf{H}}}{2} \geq 0, \quad \left[ \begin{array}{cccc} J &  G \\ -G^{\mathsf{H}} & N
\end{array}
\right] :=  \frac{{\S} - {\S}^{\mathsf{H}}}{2}.
\]
This construction yields $Q=I_n$ because of the chosen factorization $X=T^{\mathsf{H}}T$. Note that the factor $T$ is unique up to a unitary factor $U$, since $T^{\mathsf{H}}U^{\mathsf{H}}UT=T^{\mathsf{H}}T$, but this factor $U$ will not affect the results described in this paper.

There is  a lot of freedom in the representation of the system, since we could have used any matrix $X$ from the set $\XWpd$, or we could have chosen a representation where $Q$ was not the identity matrix. In the remainder of this paper, we will restrict ourselves to pH models where $Q=I_n$. The freedom remaining is thus the choice of the matrix $X$ from $\XWpd$, which we will use to make the representation 'maximally'  robust or well-conditioned to perturbations.

\begin{remark}\label{rem2}
We stress  that when the model $\M$ is real, then all the definitions and properties discussed above still hold. Moreover, the sets $\XWpd$ and $\XWpdpd$ can be constrained to be
real without altering any of the results, since the real part $X_\Re$ of
a Hermitian matrix $X$ is symmetric and positive (semi-)definite whenever $X$ is Hermitian and (semi-)definite. When the model $\M$ is real it therefore follows that whenever $W(X)\ge 0$ (or $W(X) > 0$) then we also have that $W(X_\Re)\ge 0$ (or $W(X_\Re)>0$), and it then suffices to verify these conditions over the real symmetric matrices only. When doing that, the corresponding pH realizations will also be real. Finally, we point out that the extremal solutions $X_-$ and $X_+$ of the Riccati equations are also real when the model $\M$ is real.
\end{remark}

\section{The passivity radius}\label{sec:passrad}
Our goal to achieve robust pH representations of a passive system can be realized in different ways. A natural measure for this optimality is a large \emph{passivity radius}
$\rho_{\M}$, which is the smallest perturbation (in an appropriate norm) to the coefficients of a model $\M $ that makes the system non-passive. In this section we recall and extend a few results on passivity radii from \cite{BeaMV19} that will be employed in the next section.

Once we have determined a solution $X\in \XWpd$ to the LMI~(\ref{KYP-LMI}), we can determine the representations \eqref{pH} as discussed in Section~\ref{sec:PH} and the system is automatically passive (but not necessarily strictly passive). For each such representation we can determine the passivity radius and then choose the solution $X\in \XWpd$ which is most robust under perturbations ${\Delta_\M}$ of the model parameters $\M:=\{A,B,C,D\}$. This is a very suitable approach for perturbation analysis, since  as soon as we fix $X$, the matrix
\begin{equation} \label{hx}
W(X,\M) = \left[
\begin{array}{cc}
0 & C^{\mathsf{H}} \\
C & D^{\mathsf{H}}+D
\end{array}
\right]
- \left[\begin{array}{cc}
A^{\mathsf{H}}X + X\,A & X\,B \\
B^{\mathsf{H}}X & 0
\end{array}
\right]
\end{equation}
is linear in the unknowns $A,B,C,D$ and when we perturb the coefficients, then we preserve strict passivity as long as
\begin{eqnarray*}
 W(X,\M+\Delta_\M) &:=& \left[
\begin{array}{cc}
 0 & (C+\Delta_C)^{\mathsf{H}} \\
(C+\Delta_C) & (D+\Delta_D)^{\mathsf{H}}+(D+\Delta_D)
\end{array}
\right]\\
&& \qquad- \left[\begin{array}{cc}
(A+\Delta_A)^{\mathsf{H}}X + X\,(A+\Delta_A) & X\,(B+\Delta_B) \\
(B+\Delta_B)^{\mathsf{H}}X & 0
\end{array}
\right]>0.
\end{eqnarray*}

Hence, given $X\in \XWpdpd$, we can look for the smallest perturbation $\Delta_\M$ to our model $\M $ that makes $\det W(X,\M+\Delta_\M)=0$. To measure the size of the perturbation $\Delta_\M$ of a state space model $\M $ we will use the Frobenius norm or the spectral norm
\[
 \|\Delta_\M \|_F := \left \|\left[\begin{array}{ccc}
\Delta_A & \Delta_B \\
\Delta_C & \Delta_D
\end{array}\right] \right \|_F , \quad
 \|\Delta_\M \|_2 := \left \|\left[\begin{array}{ccc}
\Delta_A & \Delta_B \\
\Delta_C & \Delta_D
\end{array}\right] \right \|_2,
\]
and we use also the following \emph{$X$-passivity radius}, which was introduced in \cite{BeaMV19} and gives a bound for the usual passivity radius.
\begin{definition}
For $X\in \XWpdpd$ the  \emph{$X$-passivity radius} is defined as
\[
	\rho_\M(X):= \inf_{\Delta_\M\in \mathbb C^{n+m,n+m}}\left\{ \| \Delta_\M \| \; | \; \det W(X,\M+\Delta_\M) = 0\right\}.
\]
\end{definition}
Note that in order to compute $\rho_\M(X)$ for the model $\M $, we must have a matrix $X\in \XWpdpd$, since $W(X,\M)$  must be positive definite to start with and also $X$ should be positive definite to obtain a state-space transformation from it. The following relation between the $X$-passivity radius and the usual passivity radius was also given in \cite{BeaMV19}.
\begin{lemma}
	The passivity radius for a given model $\M$ satisfies
	\begin{equation}
\nonumber
	\rho_{\M}:= \sup_{X\in \XWpdpd}\inf_{\Delta_\M\in \mathbb C^{n+m,n+m}}\{\| \Delta_\M \| | \det W(X,\M+\Delta_\M)=0\}= \sup_{X\in \XWpdpd} \rho_{\M}(X).\label{passive}
	\end{equation}
\end{lemma}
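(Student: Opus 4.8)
The plan is to treat the two equalities in the statement separately. The second one is immediate: by the very definition of the $X$-passivity radius, $\inf_{\Delta_\M}\{\|\Delta_\M\|\mid\det W(X,\M+\Delta_\M)=0\}=\rho_\M(X)$, so taking $\sup_{X\in\XWpdpd}$ on both sides produces $\sup_{X}\rho_\M(X)$. The substance is therefore to show that $r^\ast:=\sup_{X\in\XWpdpd}\rho_\M(X)$ equals the \emph{true} passivity radius
\[
\rho_\M=\inf\{\|\Delta_\M\|\ \mid\ \M+\Delta_\M\ \text{is not passive}\}.
\]
Throughout I will assume $\XWpdpd\neq\emptyset$, i.e.\ $\M$ is strictly passive; otherwise the supremum on the right is empty and $\rho_\M=0$, so the statement is degenerate. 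Both required inequalities rest on the same three ingredients: the affine dependence of $W(X,\cdot)$ on $\M$ exhibited in \eqref{hx}; a continuity/intermediate-value argument applied to the eigenvalues of the segment $t\mapsto W(X,\M+t\Delta_\M)$, $t\in[0,1]$; and the elementary half of the positive-real (Kalman--Yakubovich--Popov) lemma recalled in Section~\ref{sec:PH}: if $W(X,\M')\ge0$ with $X>0$, then the quadratic storage function built from $X$ satisfies the dissipation inequality \eqref{supply}, so $\M'$ is passive (this direction requires no minimality of $\M'$).

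For $\rho_\M\ge r^\ast$ I would fix $X\in\XWpdpd$ and any $\Delta_\M$ with $\|\Delta_\M\|<\rho_\M(X)$. For every $t\in[0,1]$ we have $\|t\Delta_\M\|\le\|\Delta_\M\|<\rho_\M(X)$, hence $\det W(X,\M+t\Delta_\M)\ne0$; since $W(X,\M)>0$ and $t\mapsto W(X,\M+t\Delta_\M)$ is continuous with nonsingular values throughout, no eigenvalue can change sign along the segment (it would have to pass through $0$), so $W(X,\M+\Delta_\M)>0$. By the elementary KYP direction, $\M+\Delta_\M$ is passive. Thus no perturbation of norm $<\rho_\M(X)$ can destroy passivity, i.e.\ $\rho_\M\ge\rho_\M(X)$, and taking the supremum over $X\in\XWpdpd$ gives $\rho_\M\ge r^\ast$.

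For the reverse inequality $\rho_\M\le r^\ast$ I would take any $\Delta_\M$ for which $\M+\Delta_\M$ is not passive, and show $\|\Delta_\M\|\ge\rho_\M(X)$ for every $X\in\XWpdpd$. Fix such an $X$. Were $W(X,\M+\Delta_\M)$ positive semidefinite, the same storage-function argument would make $\M+\Delta_\M$ passive; hence $\lambda_{\min}\bigl(W(X,\M+\Delta_\M)\bigr)<0$, while $\lambda_{\min}\bigl(W(X,\M)\bigr)>0$. Applying the intermediate value theorem to the continuous function $t\mapsto\lambda_{\min}\bigl(W(X,\M+t\Delta_\M)\bigr)$ on $[0,1]$, there is a first $t^\ast\in(0,1)$ at which it vanishes; then $W(X,\M+t^\ast\Delta_\M)$ is positive semidefinite and singular, so $\det W(X,\M+t^\ast\Delta_\M)=0$ and $\rho_\M(X)\le\|t^\ast\Delta_\M\|<\|\Delta_\M\|$. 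Since this holds for all $X\in\XWpdpd$, we get $r^\ast\le\|\Delta_\M\|$; taking the infimum over all passivity-destroying $\Delta_\M$ yields $r^\ast\le\rho_\M$. Combining the two inequalities proves the claim.

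I do not expect a deep obstacle; the point that needs the most care is the direction $\rho_\M\le r^\ast$, specifically the step that a perturbed, non-passive system must force $W(X,\cdot)$ to lose positive (semi)definiteness for \emph{the same} $X$ that was admissible for the original $\M$ --- this is exactly where the elementary direction of the positive-real lemma enters, and why it matters that $\XWpdpd\ne\emptyset$. The remaining subtleties are purely of bookkeeping type: the infimum defining $\rho_\M$ need not be attained, so the whole argument is phrased with inequalities only, and the degenerate case $\XWpdpd=\emptyset$ is disposed of separately as above. Alternatively, one may simply cite \cite{BeaMV19}, where this relation was established.
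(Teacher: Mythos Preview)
Your proposal is correct. The paper itself does not supply a proof of this lemma at all; it merely states the result and attributes it to \cite{BeaMV19}. Your final sentence already anticipates this option, but the self-contained argument you give is sound: the second equality is definitional, and for the first you establish both inequalities via the affine dependence of $W(X,\cdot)$ on $\M$, the continuity of eigenvalues along the segment $t\mapsto W(X,\M+t\Delta_\M)$, and the elementary direction of the positive-real lemma (existence of $X>0$ with $W(X,\M')\ge 0$ implies passivity of $\M'$ via the quadratic storage function $\tfrac12 x^{\mathsf H}Xx$). The only cosmetic point is that in the reverse inequality you obtain $\rho_\M(X)\le t^\ast\|\Delta_\M\|<\|\Delta_\M\|$ for each fixed $X$, which after the supremum over $X$ yields $r^\ast\le\|\Delta_\M\|$ (not strict), and then the infimum over passivity-destroying $\Delta_\M$ gives $r^\ast\le\rho_\M$; your handling of the degenerate case $\XWpdpd=\emptyset$ is also appropriate. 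In short, you have supplied what the paper deliberately outsources to the reference.
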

We now provide an exact formula for the $X$-passivity radius based on a one-parameter optimization problem. For this, we rewrite the condition $W(X,\M+\Delta_\M)>0$ as
\begin{eqnarray}
&& \left[\begin{array}{cc} -X & 0 \\ 0 & I_m \end{array}\right]
\left[\begin{array}{cc} A + \Delta_A & B+\Delta_B \\ C+\Delta_C & D+\Delta_D \end{array}\right]\nonumber \\
&& \qquad +
\left[\begin{array}{cc} A^{\mathsf{H}}+\Delta_A^{\mathsf{H}} & C^{\mathsf{H}}+\Delta_C^{\mathsf{H}} \\ B^{\mathsf{H}}+\Delta_B^{\mathsf{H}} & D^{\mathsf{H}}+\Delta_D^{\mathsf{H}} \end{array}\right]
\left[\begin{array}{cc} -X & 0 \\ 0 & I_m \end{array}\right] >0. \label{hatXW}
\end{eqnarray}
Setting
\begin{equation} \label{defWhatX}
\hat W:=W(X), \; \hat X := \left[\begin{array}{cc} X & 0 \\ 0 & I_m \end{array}\right], \; \Delta_\S := \left[\begin{array}{cc} -\Delta_A & -\Delta_B \\ \Delta_C & \Delta_D \end{array}\right],
\end{equation}
inequality \eqref{hatXW} can be written as the LMI
\begin{equation} \label{WDelta}
W(X,\M+\Delta_\M)= \hat W+\hat X \Delta_\S + \Delta_\S^{\mathsf{H}} \hat X >0
\end{equation}
as long as the system is still passive. In order to violate this condition, we need to find the smallest $\Delta_\S$ such that $\det W(X,\M+\Delta_\M) =0$. The following theorem, based on results of \cite{OveV05} and \cite{BeaMV19}, gives the minimal perturbation $\Delta_\S$ in both the Frobenius norm and the spectral norm.  We point out that the definition of $\Delta_\S$ yields that $\|\Delta_\S\|_{2}=\|\Delta_\M\|_{2}$ and $\|\Delta_\S\|_{F}=\|\Delta_\M\|_{F}$.
\begin{theorem} \label{thm:mingamma}
	Consider the matrices $\hat X, \hat W$ in (\ref{defWhatX}) and the pointwise positive semidefinite matrix function
\begin{equation}\label{defmgamma}
M(\gamma):= \left[ \begin{array}{cc} \gamma \hat X \hat W^{-\frac{1}{2}} \\ \hat W^{-\frac{1}{2}} / \gamma\end{array}\right]
\left[ \begin{array}{cc} \gamma \hat W^{-\frac{1}{2}} \hat X & \hat W^{-\frac{1}{2}} / \gamma \end{array}\right]
\end{equation}
in the real parameter $\gamma$. Then the largest eigenvalue $\lambda_{\max}(M(\gamma))$ is a \emph{unimodal function} of $\gamma$, ({i.e.} it is first monotonically decreasing and then monotonically increasing with growing $\gamma$). At the minimizing value $\underline \gamma$,  $M(\underline{\gamma})$ has an eigenvector $z$, {i.e.}
\[
 M(\underline{\gamma}) z = \underline\lambda_{\max} z, \quad z:=\left[ \begin{array}{cc} u \\ v \end{array}\right],
 \]
where
$  \|u\|_2^2=\|v\|_2^2=1$.
The minimum norm perturbation $\Delta_\S$ is of rank $1$ and is given by $\Delta_\S=uv^{\mathsf{H}}/\underline{\lambda}_{\max}$. It has norm $1/\underline{\lambda}_{\max}$
both in the spectral norm and in the Frobenius norm.
\end{theorem}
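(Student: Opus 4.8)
The plan is to recast the condition $\det W(X,\M+\Delta_\M)=0$, to produce a family of lower bounds $\|\Delta_\S\|\ge 1/\lambda_{\max}(M(\gamma))$ valid for \emph{every} $\gamma>0$, and then to construct a rank-one perturbation attaining $1/\underline\lambda_{\max}$. By \eqref{WDelta} a perturbation is critical exactly when $\hat W+\hat X\Delta_\S+\Delta_\S^{\mathsf H}\hat X$ is singular, i.e.\ when there is $y\neq 0$ with $(\hat W+\hat X\Delta_\S+\Delta_\S^{\mathsf H}\hat X)y=0$. Scaling $y$ so that $y^{\mathsf H}\hat W y=1$ and pre-multiplying by $y^{\mathsf H}$ turns this into $\Re(y^{\mathsf H}\hat X\Delta_\S y)=-\tfrac12$; equivalently, conjugating by $\hat W^{-1/2}$, the matrix is singular iff $\lambda_{\min}(E+E^{\mathsf H})\le -1$ with $E:=\hat W^{-1/2}\hat X\Delta_\S\hat W^{-1/2}$.

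First I would dispatch the unimodality claim. Writing $P:=\matr{c}\gamma\hat X\hat W^{-1/2}\\ \hat W^{-1/2}/\gamma\rix$ one has $M(\gamma)=PP^{\mathsf H}$, and since $P$ has full column rank its nonzero eigenvalues agree with those of $P^{\mathsf H}P=\gamma^2 A_1+\gamma^{-2}A_2$, where $A_1:=\hat W^{-1/2}\hat X^2\hat W^{-1/2}>0$ and $A_2:=\hat W^{-1}>0$; hence $\lambda_{\max}(M(\gamma))=\lambda_{\max}(\gamma^2 A_1+\gamma^{-2}A_2)$. Substituting $t:=\gamma^2$ and using $\lambda_{\max}(tA_1+t^{-1}A_2)=\max_{\|x\|=1}\!\big(t\,x^{\mathsf H}A_1x+t^{-1}x^{\mathsf H}A_2x\big)$, each summand is convex in $t$ on $(0,\infty)$, so $\lambda_{\max}(M(\gamma))$ is a pointwise supremum of convex functions of $t$ and therefore convex in $t$; since $A_1,A_2>0$ it blows up as $t\downarrow 0$ and as $t\to\infty$, so it is coercive, hence unimodal in $\gamma$ on $(0,\infty)$, with an interior minimizer $\underline\gamma$ and minimum value $\underline\lambda_{\max}>0$.

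For the lower bound, take any critical $\Delta_\S$ with its vector $y$ as above, set $x:=\hat W^{1/2}y$ (so $\|x\|=1$), and note that for every $\gamma>0$, $y^{\mathsf H}\hat X\Delta_\S y=(\gamma\hat X y)^{\mathsf H}\Delta_\S(\gamma^{-1}y)$. Cauchy--Schwarz, submultiplicativity of the spectral norm, and $ab\le\tfrac12(a^2+b^2)$ give $\tfrac12=|\Re(y^{\mathsf H}\hat X\Delta_\S y)|\le\|\Delta_\S\|_2\,\|\gamma\hat X y\|\,\|\gamma^{-1}y\|\le\tfrac12\|\Delta_\S\|_2\big(\gamma^2 y^{\mathsf H}\hat X^2 y+\gamma^{-2}y^{\mathsf H}y\big)=\tfrac12\|\Delta_\S\|_2\,x^{\mathsf H}(\gamma^2 A_1+\gamma^{-2}A_2)x\le\tfrac12\|\Delta_\S\|_2\,\lambda_{\max}(M(\gamma))$. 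Optimizing over $\gamma$ yields $\|\Delta_\S\|_F\ge\|\Delta_\S\|_2\ge 1/\underline\lambda_{\max}$, the claimed bound for both norms.

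Finally I would exhibit a matching rank-one perturbation. Because the minimum of the convex coercive function $\gamma\mapsto\lambda_{\max}(M(\gamma))$ is interior, $0$ lies in its subdifferential at $\underline\gamma$; as $q$ ranges over the top eigenspace of $\underline\gamma^2 A_1+\underline\gamma^{-2}A_2$ the quantity $q^{\mathsf H}(A_1-\underline\gamma^{-4}A_2)q$ sweeps an interval, so there is a unit eigenvector $q$ with $\underline\gamma^2 q^{\mathsf H}A_1 q=\underline\gamma^{-2}q^{\mathsf H}A_2 q$. Put $z:=Pq=\matr{c}u\\ v\rix$; then $z$ is an eigenvector of $M(\underline\gamma)$ for $\underline\lambda_{\max}$ with $u=\underline\gamma\hat X\hat W^{-1/2}q$ and $v=\underline\gamma^{-1}\hat W^{-1/2}q$, and the stationarity identity is precisely $\|u\|^2=\|v\|^2$, so after rescaling $q$ we may assume $\|u\|=\|v\|=1$; then the rank-one matrix $\Delta_\S:=-uv^{\mathsf H}/\underline\lambda_{\max}$ has $\|\Delta_\S\|_2=\|\Delta_\S\|_F=1/\underline\lambda_{\max}$. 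Using the relations between $u,v,q$ one gets $\hat X u=\underline\gamma^2\hat X^2 v$, $v^{\mathsf H}\hat X^2 v=\underline\gamma^{-4}$, and, from the eigenvalue equation, $\underline\gamma^2\hat X^2 v+\underline\gamma^{-2}v=\underline\lambda_{\max}\hat W v$; a direct substitution then collapses $(\hat W+\hat X\Delta_\S+\Delta_\S^{\mathsf H}\hat X)v$ to $(1-1)\hat W v=0$, so $W(X,\M+\Delta_\M)$ is singular and $\Delta_\S$ meets the lower bound in both norms. I expect this last step to be the crux: one has to recognize that the optimal perturbation is built from the top eigenvector of $M(\underline\gamma)$, that stationarity at $\underline\gamma$ is exactly the balance $\|u\|=\|v\|$ that makes the rank-one norm equal $1/\underline\lambda_{\max}$, and that the eigenvalue equation is precisely what annihilates $v$; a minor technical wrinkle is handling a possibly multiple $\underline\lambda_{\max}$ through the subdifferential of the largest eigenvalue rather than an ordinary derivative.
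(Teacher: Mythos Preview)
The paper does not give its own proof of this theorem: it is stated with attribution to the references \cite{OveV05} and \cite{BeaMV19}, and the text moves on immediately to Corollary~\ref{cor:lev}. So there is no in-paper argument to compare against.

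Your proof is correct and self-contained. The three ingredients --- the convexity/unimodality of $t\mapsto\lambda_{\max}(tA_1+t^{-1}A_2)$ via the Rayleigh-quotient supremum, the Cauchy--Schwarz/AM--GM lower bound $\|\Delta_\S\|_2\ge 1/\lambda_{\max}(M(\gamma))$ for every $\gamma$, and the construction of the attaining rank-one $\Delta_\S$ from the top eigenvector at $\underline\gamma$ --- are exactly the ideas underlying the Overton--Van~Dooren and Beattie--Mehrmann--Van~Dooren treatments, so in spirit you have reproduced the intended argument rather than found an alternative route. Two small remarks: (i) your $\Delta_\S=-uv^{\mathsf H}/\underline\lambda_{\max}$ differs in sign from the statement, but this is immaterial since $u,v$ are fixed only up to a common phase; (ii) your handling of a possibly degenerate top eigenvalue via the subdifferential is the right way to secure the balance condition $\|u\|=\|v\|$, and you have verified directly that the eigenvalue equation $\underline\gamma^2\hat X^2 v+\underline\gamma^{-2}v=\underline\lambda_{\max}\hat W v$ annihilates $v$ under $\hat W+\hat X\Delta_\S+\Delta_\S^{\mathsf H}\hat X$, which closes the argument cleanly.
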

In \cite{BeaMV19} also the following  simple bound for $\underline{\lambda}_{\max}$ was derived.
\begin{corollary}\label{cor:lev} Consider the matrices $\hat X, \hat W$ in (\ref{defWhatX}) and the pointwise positive semidefinite matrix function $M(\gamma)$ as in (\ref{defmgamma}). The largest eigenvalue of $M(\gamma)$ is also the largest eigenvalue of
\[
\gamma^2 \hat W^{-\frac{1}{2}} \hat X^2 \hat W^{-\frac{1}{2}} + \hat W^{-1}/\gamma^2.
\]
An upper bound for $\underline{\lambda}_{\max}$ is given by $\underline{\lambda}_{\max}\le \frac{2}{\alpha\beta}$ where $\alpha^2:=\lambda_{\min}(\hat W)$ and $\beta^2=\lambda_{\min}(\hat X^{-1}\hat W\hat X^{-1})$. The corresponding lower bound for $\| \Delta_\S \|_2$ and $\| \Delta_\S \|_F$ is given by
\[
  \rho_\M(X) = \min_{\gamma} \| \Delta_\S \|_2 =\min_{\gamma} \| \Delta_\S \|_F \ge \alpha\beta/2.
\]
\end{corollary}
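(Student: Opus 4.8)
The plan is to exploit the outer–product structure of $M(\gamma)$ so that the whole statement collapses to a one–dimensional calculus exercise. First I would write $M(\gamma)=P_\gamma P_\gamma^{\mathsf H}$ with
\[
P_\gamma:=\left[\begin{array}{c}\gamma\,\hat X\hat W^{-\frac12}\\[1mm] \hat W^{-\frac12}/\gamma\end{array}\right],
\]
using that $\hat X$ and $\hat W^{-1/2}$ are Hermitian, so that indeed $P_\gamma^{\mathsf H}=\left[\begin{array}{cc}\gamma\,\hat W^{-\frac12}\hat X & \hat W^{-\frac12}/\gamma\end{array}\right]$. Since $P_\gamma$ has full column rank (the block $\hat W^{-1/2}/\gamma$ is invertible), $M(\gamma)$ is positive semidefinite of full column rank, and $P_\gamma P_\gamma^{\mathsf H}$ and $P_\gamma^{\mathsf H}P_\gamma$ share the same nonzero eigenvalues; hence $\lambda_{\max}(M(\gamma))=\lambda_{\max}(P_\gamma^{\mathsf H}P_\gamma)$. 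A direct multiplication gives $P_\gamma^{\mathsf H}P_\gamma=\gamma^2\hat W^{-\frac12}\hat X^2\hat W^{-\frac12}+\hat W^{-1}/\gamma^2$, which is the first assertion.

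For the upper bound I would apply Weyl's inequality $\lambda_{\max}(A+B)\le\lambda_{\max}(A)+\lambda_{\max}(B)$ to these two Hermitian summands, obtaining $\lambda_{\max}(M(\gamma))\le\gamma^2\lambda_{\max}(\hat W^{-\frac12}\hat X^2\hat W^{-\frac12})+\lambda_{\max}(\hat W^{-1})/\gamma^2$ for every $\gamma$. The second term equals $1/(\gamma^2\lambda_{\min}(\hat W))=1/(\gamma^2\alpha^2)$. For the first term I would use the same swap once more: $\hat W^{-\frac12}\hat X^2\hat W^{-\frac12}=(\hat W^{-\frac12}\hat X)(\hat W^{-\frac12}\hat X)^{\mathsf H}$ has the same nonzero spectrum as $(\hat W^{-\frac12}\hat X)^{\mathsf H}(\hat W^{-\frac12}\hat X)=\hat X\hat W^{-1}\hat X$, whose largest eigenvalue is $1/\lambda_{\min}(\hat X^{-1}\hat W\hat X^{-1})=1/\beta^2$. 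Thus $\lambda_{\max}(M(\gamma))\le \gamma^2/\beta^2+1/(\gamma^2\alpha^2)$ for all $\gamma$.

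It then remains to minimize the scalar function $g(\gamma):=\gamma^2/\beta^2+1/(\gamma^2\alpha^2)$ over $\gamma>0$. Substituting $t=\gamma^2$ and differentiating, the minimum is attained at $t=\beta/\alpha$ with value $2/(\alpha\beta)$. Since $\underline\lambda_{\max}=\min_\gamma\lambda_{\max}(M(\gamma))\le\min_\gamma g(\gamma)=2/(\alpha\beta)$, the bound $\underline\lambda_{\max}\le 2/(\alpha\beta)$ follows. Finally, Theorem~\ref{thm:mingamma} asserts that the minimum-norm perturbation $\Delta_\S$ has norm $1/\underline\lambda_{\max}$ in both the spectral and the Frobenius norm, and because $\|\Delta_\S\|=\|\Delta_\M\|$ we conclude $\rho_\M(X)=1/\underline\lambda_{\max}\ge\alpha\beta/2$.

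All the steps are elementary; the only point requiring a little care is the repeated use of the identity that $PP^{\mathsf H}$ and $P^{\mathsf H}P$ have the same nonzero spectrum (once for $M(\gamma)$ and once for $\hat W^{-1/2}\hat X^2\hat W^{-1/2}$), together with correctly tracking which inversions interchange $\lambda_{\min}$ and $\lambda_{\max}$. I do not foresee any genuine obstacle.
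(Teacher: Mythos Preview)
Your argument is correct. The outer--product identification $M(\gamma)=P_\gamma P_\gamma^{\mathsf H}$ together with the standard fact that $P_\gamma P_\gamma^{\mathsf H}$ and $P_\gamma^{\mathsf H}P_\gamma$ share nonzero spectra gives the first claim cleanly; Weyl's inequality plus the second swap $\lambda_{\max}(\hat W^{-1/2}\hat X^2\hat W^{-1/2})=\lambda_{\max}(\hat X\hat W^{-1}\hat X)=1/\beta^2$ is exactly the right way to reach the scalar bound $\gamma^2/\beta^2+1/(\gamma^2\alpha^2)$, and the AM--GM (or calculus) minimization then yields $2/(\alpha\beta)$. The final passage to $\rho_\M(X)\ge\alpha\beta/2$ via Theorem~\ref{thm:mingamma} is also right.

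As for comparison: the paper does not actually prove Corollary~\ref{cor:lev}; it simply quotes the result from \cite{BeaMV19}. So there is no in-paper proof to set yours against. Your derivation is precisely the natural one and almost certainly what \cite{BeaMV19} contains; in any case it stands on its own.
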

The following theorem, also proven in \cite{BeaMV19}, constructs a rank one perturbation which makes the matrix  $W_{\Delta_\M}$ singular and therefore gives an upper bound for $\rho_M(X)$.
\begin{theorem}\label{thm:Xpassivity}
Let $\M=\{A,B,C,D\}$ be a given model and assume that we are given a matrix $X\in \XWpdpd$, then the $X$-passivity radius $\rho_\M(X)$ is bounded by
\[
 \alpha\beta/2 \le \rho_\M(X) \le  \alpha\beta/(1+|v^{\mathsf{H}}w|),
 \]
where $u$, $v$ and $w$ are vectors of norm $1$, satisfying
\[
\alpha^2:=\lambda_{\min}(\hat W),\beta^2=\lambda_{\min}(\hat X^{-1}\hat W\hat X^{-1}), \hat W^{-\frac{1}{2}}v=v/\alpha, \hat W^{-\frac{1}{2}} \hat X u=w/\beta.
\]
Moreover, if $v$ and $w$ are linearly dependent, then $\rho_\M(X)=\alpha\beta/2$.
\end{theorem}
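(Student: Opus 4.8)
The lower bound $\alpha\beta/2 \le \rho_\M(X)$ is already contained in Corollary~\ref{cor:lev}, so the real content is the upper bound and the equality case. My plan is to exhibit an explicit rank-one perturbation $\Delta_\S$ that drives $\det W(X,\M+\Delta_\M)$ to zero and has norm exactly $\alpha\beta/(1+|v^{\mathsf H}w|)$. By Theorem~\ref{thm:mingamma}, the minimal-norm singularizing perturbation has the form $\Delta_\S = uv^{\mathsf H}/\underline\lambda_{\max}$ with $M(\underline\gamma)z=\underline\lambda_{\max}z$, so it suffices to produce \emph{any} admissible $\gamma$ together with a unit eigenvector of $M(\gamma)$ and read off an upper bound for $1/\lambda_{\max}(M(\gamma))$; minimizing over that particular family of test vectors then gives the claimed bound. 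Concretely, I would use the alternate form from Corollary~\ref{cor:lev}, namely that $\lambda_{\max}(M(\gamma))$ equals the largest eigenvalue of $\gamma^2 \hat W^{-\frac12}\hat X^2\hat W^{-\frac12} + \hat W^{-1}/\gamma^2$, and test it against the vector $v$ (the unit vector with $\hat W^{-\frac12}v = v/\alpha$).

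The key computation is the Rayleigh quotient. With $\hat W^{-\frac12}\hat X u = w/\beta$ and $\hat W^{-\frac12} v = v/\alpha$, one gets $v^{\mathsf H}\hat W^{-1}v = 1/\alpha^2$ and $v^{\mathsf H}\hat W^{-\frac12}\hat X^2\hat W^{-\frac12}v = \|\,\hat X\hat W^{-\frac12}v\,\|_2^2$; relating $\hat X\hat W^{-\frac12}v$ to $w$ through the adjoint identity $\hat W^{-\frac12}\hat X u = w/\beta$ and the definitions of $\alpha,\beta$ lets me express this as $\beta^2|v^{\mathsf H}w|^2/\alpha^2$ plus a nonnegative remainder (this is where the choice of $v$ and $w$ as the respective minimal singular vectors is used). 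Thus $v^{\mathsf H}\big(\gamma^2\hat W^{-\frac12}\hat X^2\hat W^{-\frac12}+\hat W^{-1}/\gamma^2\big)v \ge \gamma^2\beta^2|v^{\mathsf H}w|^2/\alpha^2 + 1/(\gamma^2\alpha^2)$. Actually the cleanest route is to work with a two-dimensional trial subspace spanned by $v$ and $w$ (pulled back appropriately), diagonalize the compression of $M(\gamma)$ there, optimize the resulting $2\times2$ eigenvalue over $\gamma$, and obtain the bound $\lambda_{\max}(M(\underline\gamma)) \ge (1+|v^{\mathsf H}w|)/(\alpha\beta)$; inverting gives $\rho_\M(X)=1/\lambda_{\max}(M(\underline\gamma)) \le \alpha\beta/(1+|v^{\mathsf H}w|)$.

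For the equality case, when $v$ and $w$ are linearly dependent we have $|v^{\mathsf H}w|=1$, so the upper bound collapses to $\alpha\beta/2$, which already matches the lower bound from Corollary~\ref{cor:lev}; hence $\rho_\M(X)=\alpha\beta/2$ is forced by the sandwich. The step I expect to be the main obstacle is the Rayleigh-quotient estimate in the middle paragraph: one must verify that, on the relevant two-dimensional test subspace, the compression of $\gamma^2\hat W^{-\frac12}\hat X^2\hat W^{-\frac12}+\hat W^{-1}/\gamma^2$ has its $2\times2$ structure governed precisely by $\alpha$, $\beta$, and the single overlap parameter $v^{\mathsf H}w$, with no uncontrolled cross terms — this requires carefully using that $v$ attains $\lambda_{\min}(\hat W)$ and that $u$ (equivalently $w$) attains $\lambda_{\min}(\hat X^{-1}\hat W\hat X^{-1})$, so that the orthogonal complements contribute only in the favorable direction. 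Once that structural fact is in place, the optimization over $\gamma$ is the elementary minimization of $a\gamma^2 + b/\gamma^2$, and the result follows.
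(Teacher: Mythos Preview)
The paper itself does not give a proof of Theorem~\ref{thm:Xpassivity}; it merely cites \cite{BeaMV19} and indicates that the upper bound comes from ``construct[ing] a rank one perturbation which makes the matrix $W_{\Delta_\M}$ singular.'' So there is no detailed argument in the paper to match against, but your proposal can be compared both to that hint and to what actually works.

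Your overall strategy --- lower-bound $\underline\lambda_{\max}=\min_\gamma\lambda_{\max}(M(\gamma))$ via a Rayleigh quotient, then invert to get an upper bound on $\rho_\M(X)$ --- is valid in principle, and in fact gives the claimed bound in one line if executed with the right test vector. The gap is in your execution. You switch from the block matrix $M(\gamma)$ of Theorem~\ref{thm:mingamma} to the compressed $(n+m)\times(n+m)$ form of Corollary~\ref{cor:lev} and test against $v$, then against a two-dimensional subspace spanned by $v$ and $w$. On that compressed form, neither $v$ nor $w$ is an eigenvector of $\hat W^{-1/2}\hat X^2\hat W^{-1/2}$, so the $2\times2$ compression has entries involving $\|\hat X v\|$, $\|\hat X \hat W^{-1/2} w\|$, and inner products that are \emph{not} determined by $\alpha,\beta,v^{\mathsf H}w$ alone. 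Your hoped-for ``no uncontrolled cross terms'' does not hold on that subspace, and the obstacle you flag is real: that route does not close.

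The fix is to stay with the full $2(n+m)\times 2(n+m)$ matrix $M(\gamma)$ and use the single test vector $z=\bigl[\begin{smallmatrix}u\\v\end{smallmatrix}\bigr]$, with $u,v$ exactly the unit vectors in the statement (phase of $u$ chosen so that $w^{\mathsf H}v\ge 0$). Since $\hat W^{-1/2}\hat X u=w/\beta$ and $\hat W^{-1/2}v=v/\alpha$, one reads off directly
\[
z^{\mathsf H}M(\gamma)z \;=\; \frac{\gamma^2}{\beta^2}\;+\;\frac{2|v^{\mathsf H}w|}{\alpha\beta}\;+\;\frac{1}{\gamma^2\alpha^2},
\qquad \|z\|^2=2,
\]
so $\lambda_{\max}(M(\gamma))\ge z^{\mathsf H}M(\gamma)z/2$ for every $\gamma$, and minimizing the right-hand side over $\gamma$ gives $\underline\lambda_{\max}\ge(1+|v^{\mathsf H}w|)/(\alpha\beta)$, i.e.\ $\rho_\M(X)\le\alpha\beta/(1+|v^{\mathsf H}w|)$. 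This is equivalent to the constructive route hinted at in the paper: taking $\Delta_\S=-t\,u v^{\mathsf H}$ and conjugating $\hat W+\hat X\Delta_\S+\Delta_\S^{\mathsf H}\hat X$ by $\hat W^{-1/2}$ yields $I-(t/\alpha\beta)(wv^{\mathsf H}+vw^{\mathsf H})$, whose smallest eigenvalue hits zero precisely at $t=\alpha\beta/(1+|v^{\mathsf H}w|)$. Your equality argument for the case $|v^{\mathsf H}w|=1$ is fine.
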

The following corollary shows how  these results can be applied to pH systems.
\begin{corollary}\label{cor:xeqI} If for a given system $\mathcal M$ we have that $X=I_n$   then  the corresponding representation of the system is port-Hamiltonian, {i.e.}, it has the representation $\M:= \{J-R,G-K,G^{\mathsf{H}}+K^{\mathsf{H}},S+N\}$  and the X-passivity radius of this model is given by
 \[
 \rho_\M(I)=\frac12 \lambda_{\min}W(I,\M)= \lambda_{\min}\left[\begin{array}{cc} R & K \\ K^{\mathsf{H}} & S	 \end{array}\right].
 \]
\end{corollary}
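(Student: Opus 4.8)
The plan is to specialize the general results of this section, in particular Theorem~\ref{thm:Xpassivity} (and the bound in Corollary~\ref{cor:lev}), to the case $X=I_n$. First I would verify the claim about the realization: with $X=I_n$ the transformation $T$ in the construction of Section~\ref{sec:PH} can be taken to be $I_n$ as well, so that $\S = \matr{cc} -A & -B \\ C & D \rix$ and the Hermitian/skew-Hermitian splitting gives exactly $\matr{cc} R & K \\ K^{\mathsf{H}} & S \rix = (\S+\S^{\mathsf{H}})/2$ and $\matr{cc} J & G \\ -G^{\mathsf{H}} & N \rix = (\S-\S^{\mathsf{H}})/2$; hence $A = J-R$, $B = G-K$, $C = G^{\mathsf{H}}+K^{\mathsf{H}}$, $D = S+N$, which is the asserted pH form with $Q=I_n$.

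Next I would compute $\hat X$ and $\hat W$ from~\eqref{defWhatX} in this case. Since $X=I_n$, we get $\hat X = I_{n+m}$, and therefore $\hat X^{-1}\hat W\hat X^{-1} = \hat W = W(I,\M)$. Consequently $\alpha^2 = \lambda_{\min}(\hat W) = \lambda_{\min}(W(I,\M))$ and $\beta^2 = \lambda_{\min}(\hat X^{-1}\hat W\hat X^{-1}) = \lambda_{\min}(W(I,\M))$, so $\alpha = \beta$ and $\alpha\beta = \lambda_{\min}(W(I,\M))$. Moreover, with $\hat X = I$ the two eigenvector equations in Theorem~\ref{thm:Xpassivity}, namely $\hat W^{-1/2}v = v/\alpha$ and $\hat W^{-1/2}\hat X u = w/\beta$, both reduce to the eigenvalue problem for $\hat W^{-1/2}$ at its largest eigenvalue $1/\alpha$; thus $u$ and $v$ can be chosen equal (both unit eigenvectors of $\hat W$ for $\lambda_{\min}$), giving $w = v$ as well. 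Since $v$ and $w$ are then linearly dependent, the last sentence of Theorem~\ref{thm:Xpassivity} applies and yields $\rho_\M(I) = \alpha\beta/2 = \tfrac12\lambda_{\min}(W(I,\M))$.

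Finally, to obtain the last equality in the statement I would use~\eqref{hx} with $X=I_n$, which shows that $W(I,\M) = \S + \S^{\mathsf{H}} = 2\matr{cc} R & K \\ K^{\mathsf{H}} & S \rix$, so that $\tfrac12\lambda_{\min}(W(I,\M)) = \lambda_{\min}\matr{cc} R & K \\ K^{\mathsf{H}} & S \rix$, as claimed. I do not anticipate a serious obstacle here: the only point that needs a little care is checking that the degenerate case of Theorem~\ref{thm:Xpassivity} (linear dependence of $v$ and $w$) really does occur when $\hat X$ is a multiple of the identity, i.e.\ that one may consistently pick the same eigenvector for both conditions; this is immediate because the two conditions collapse to the same spectral problem. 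Everything else is a direct substitution $X = I_n$ into the formulas already established.
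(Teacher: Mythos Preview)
Your proposal is correct and follows essentially the same approach as the paper: the paper's proof simply observes that when $X=I_n$ one has $\alpha=\beta$ and one can choose $u=w$, so the linear-dependence clause of Theorem~\ref{thm:Xpassivity} gives $\rho_\M(I)=\alpha\beta/2$. You have merely made explicit the pH-representation step and the final identity $W(I,\M)=2\bigl[\begin{smallmatrix}R&K\\K^{\mathsf H}&S\end{smallmatrix}\bigr]$, which the paper leaves implicit.
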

\begin{proof}
    The proof follows directly from Theorem~\ref{thm:Xpassivity}, since under the given assumption we have  $\alpha=\beta$ and we can choose $u=w$.
\end{proof}
Finally, we also show that pH realizations always have a better $X$-passivity radius than models that are not in pH form.
\begin{theorem}\label{thm:pHoptimal}
Let $\M=\{A,B,C,D\}$ be a given model and let $X\in\XWpdpd$, then the port-Hamiltonian model
$\M_T=\{TAT^{-1},TB,CT^{-1},D\}$ constructed from any matrix $T$ such that $X=T^\mathsf{H}T$, has an $I$-passivity radius
$\rho_{\M_T}(I)$ which is at least as large as $\rho_\M(X)$.
\end{theorem}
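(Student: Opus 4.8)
The plan is to reduce the two passivity radii in the statement to Rayleigh quotients of the fixed matrices $\hat W:=W(X,\M)$ and $\hat X$ from \eqref{defWhatX}, and then to close the comparison with one elementary semidefinite inequality. I would set $\hat T:=\diag(T,I_m)$, so that $\hat X=\hat T^{\mathsf{H}}\hat T$, and observe that the congruence \eqref{PH} is precisely the identity $W(I,\M_T)=\hat T^{-\mathsf{H}}\hat W\hat T^{-1}$ (it uses only $X=T^{\mathsf{H}}T$ and $D_T=D$). Since $\M_T$ is a pH model with $Q=I_n$, Corollary~\ref{cor:xeqI} gives $\rho_{\M_T}(I)=\frac12\lambda_{\min}(W(I,\M_T))$, and a congruence by $\hat T^{-1}$ followed by one by $\hat W^{1/2}$ (plus a reciprocal) turns this into
\[
\lambda_{\min}(W(I,\M_T))=\min_{v\neq0}\frac{v^{\mathsf{H}}\hat W v}{v^{\mathsf{H}}\hat X v}=\frac1{\lambda_{\max}\bigl(\hat W^{-1/2}\hat X\hat W^{-1/2}\bigr)}.
\]

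Next I would treat $\rho_\M(X)$. By Theorem~\ref{thm:mingamma} it equals $1/\underline\lambda_{\max}$ with $\underline\lambda_{\max}=\min_{\gamma>0}\lambda_{\max}(M(\gamma))$, and by Corollary~\ref{cor:lev} the quantity $\lambda_{\max}(M(\gamma))$ is the largest eigenvalue of
\[
\gamma^2\hat W^{-\frac12}\hat X^2\hat W^{-\frac12}+\hat W^{-1}/\gamma^2=\hat W^{-\frac12}\bigl(\gamma^2\hat X^2+\gamma^{-2}I\bigr)\hat W^{-\frac12}.
\]
So the theorem reduces to the uniform estimate $\lambda_{\max}(M(\gamma))\ge 2\,\lambda_{\max}(\hat W^{-1/2}\hat X\hat W^{-1/2})$ for all $\gamma>0$: granting it, $\underline\lambda_{\max}\ge 2/\lambda_{\min}(W(I,\M_T))$, hence $\rho_\M(X)\le\frac12\lambda_{\min}(W(I,\M_T))=\rho_{\M_T}(I)$.

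The crux is the observation that, $\hat X$ being Hermitian and $\gamma$ real, $(\gamma\hat X-\gamma^{-1}I)^2\ge 0$, i.e.\ $\gamma^2\hat X^2+\gamma^{-2}I\ge 2\hat X$; this matrix AM--GM inequality holds for \emph{every} $\gamma$. Congruence by $\hat W^{-1/2}$ preserves the Loewner order and $\lambda_{\max}$ is monotone for that order, so $\lambda_{\max}(M(\gamma))=\lambda_{\max}\bigl(\hat W^{-1/2}(\gamma^2\hat X^2+\gamma^{-2}I)\hat W^{-1/2}\bigr)\ge\lambda_{\max}\bigl(2\,\hat W^{-1/2}\hat X\hat W^{-1/2}\bigr)$, which is exactly the required bound, and taking the minimum over $\gamma$ finishes the argument together with the first paragraph.

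The only genuinely delicate point, and the one I expect to be the main obstacle, is keeping the directions of the estimates straight: one needs a lower bound on $\underline\lambda_{\max}$ (equivalently an upper bound on $\rho_\M(X)$), so the estimate $\underline\lambda_{\max}\le 2/(\alpha\beta)$ of Corollary~\ref{cor:lev} is of no use and one must argue with the exact expression for $\lambda_{\max}(M(\gamma))$. What makes the proof short is that $\gamma^2\hat X^2+\gamma^{-2}I\ge 2\hat X$ holds pointwise in $\gamma$, so the minimizing $\underline\gamma$ never has to be identified; everything else is bookkeeping with the two congruences and Corollary~\ref{cor:xeqI}.
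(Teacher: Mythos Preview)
Your proposal is correct and follows essentially the same route as the paper's proof. Both arguments reduce to the pointwise matrix inequality $\gamma^2\hat X^2+\gamma^{-2}I\ge 2\hat X$ (which you write as $(\gamma\hat X-\gamma^{-1}I)^2\ge0$ and the paper writes as the positive semidefiniteness of the $2\times2$ block matrix $\left[\begin{smallmatrix}\hat X^2 & -\hat X\\ -\hat X & I\end{smallmatrix}\right]$), then conjugate by $\hat W^{-1/2}$ and take the largest eigenvalue. The only cosmetic difference is that you read off $\rho_{\M_T}(I)=\tfrac12\lambda_{\min}W(I,\M_T)$ directly from Corollary~\ref{cor:xeqI}, whereas the paper re-derives the same quantity from the general $\min_\gamma$ formula of Corollary~\ref{cor:lev}; both lead to $\rho_{\M_T}^{-1}(I)=\|2\hat W^{-1/2}\hat X\hat W^{-1/2}\|_2$.
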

\begin{proof}
	It follows from Corollary \ref{cor:lev} that $\rho_\M(X)$ satisfies
\[ \rho_\M^{-1}(X)=\min_{\gamma}\|\left[ \begin{array}{cc} \gamma \hat W^{-\frac{1}{2}}\hat X & \hat W^{-\frac{1}{2}} / \gamma\end{array}\right] \left[ \begin{array}{cc} \gamma \hat X \hat W^{-\frac{1}{2}} \\ \hat W^{-\frac{1}{2}} / \gamma\end{array}\right] \|_2,
\]
while $\rho_{\M_T}(I)$ satisfies
\begin{eqnarray*}
 \rho_{\M_T}^{-1}(I)&=&\min_{\gamma}\|\left[ \begin{array}{cc} \gamma \hat W^{-\frac{1}{2}}\hat X^{\frac{1}{2}} & \hat W^{-\frac{1}{2}}\hat X^{\frac{1}{2}} / \gamma\end{array}\right] \left[ \begin{array}{cc} \gamma \hat X^{\frac{1}{2}} \hat W^{-\frac{1}{2}} \\ \hat X^{\frac{1}{2}}\hat W^{-\frac{1}{2}} / \gamma\end{array}\right] \|_2\\
&=& \|\left[ \begin{array}{cc} \hat W^{-\frac{1}{2}}\hat X^{\frac{1}{2}} & \hat W^{-\frac{1}{2}}\hat X^{\frac{1}{2}} \end{array}\right] \left[ \begin{array}{cc}  \hat X^{\frac{1}{2}} \hat W^{-\frac{1}{2}} \\ \hat X^{\frac{1}{2}}\hat W^{-\frac{1}{2}} \end{array}\right] \|_2 = \| 2 \hat W^{-\frac{1}{2}} \hat X \hat W^{-\frac{1}{2}}\|_2.
\end{eqnarray*}
But the matrix inequality
\[ \left[ \begin{array}{cc} \gamma \hat W^{-\frac{1}{2}} & \hat W^{-\frac{1}{2}} / \gamma\end{array}\right] \left[ \begin{array}{cc} \hat X^2 & -\hat X \\ -\hat X & I_n \end{array}\right] \left[ \begin{array}{cc} \gamma \hat W^{-\frac{1}{2}} \\ \hat W^{-\frac{1}{2}} / \gamma\end{array}\right] \ge 0
\]
	implies that
\[
	\left[ \begin{array}{cc} \gamma \hat W^{-\frac{1}{2}} & \hat W^{-\frac{1}{2}} / \gamma\end{array}\right] \left[ \begin{array}{cc} \hat X^2 & 0 \\ 0 & I_n \end{array}\right] \left[ \begin{array}{cc} \gamma \hat W^{-\frac{1}{2}} \\ \hat W^{-\frac{1}{2}} / \gamma\end{array}\right] \ge
	2 \hat W^{-\frac{1}{2}} \hat X \hat W^{-\frac{1}{2}}
\]
for all values of $\gamma$, and therefore $\rho_{\M}^{-1}(X) \ge \rho_{\M_T}^{-1}(I)$ or $\rho_{\M}(X) \le \rho_{\M_T}(I)$.
Note also that any other factorization $X=(UT)^\mathsf{H}(UT)$ yields the same result, since $\rho_{\M_{(UT)}}(I)=\rho_{\M_T}(I)$.
\end{proof}

\section{Maximizing the passivity radius} \label{sec:maxpass}
The main goal of our paper is the maximization of the passivity radius over all pH representations of a passive system. For this we now have a closer look at the constrained LMI
\begin{equation} \label{xi}
W(X,\M) \ge \xi \diag(X,I_m)
\end{equation}
and obtain the following theorem.
\begin{theorem}  \label{thm:maxoverX}
Let $\M:=\{A,B,C,D\}$ be a minimal realization of a passive system, and let $X$ be any matrix in $\XWpd$. Then there is a unique $\xi^*(X)$ which is maximal for
the matrix inequality \eqref{xi}. Moreover, this value of $\xi^*(X)$ is also the passivity radius of the pH system $\M_T=\{TAT^{-1},TB,CT^{-1},D\}$, where $X=T^\mathsf{H}T$, in both the  spectral and Frobenius norm.
\end{theorem}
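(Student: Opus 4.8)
The plan is to dispose of the two assertions in turn, reducing the second one to Corollary~\ref{cor:xeqI} by means of the very congruence that produces the port-Hamiltonian realization. For the first assertion I would argue as follows. Since $X\in\XWpd$ we have $X>0$, hence the block matrix $\hat X:=\diag(X,I_m)$ of \eqref{defWhatX} is positive definite, and applying the congruence with $\hat X^{-1/2}$ turns the constrained inequality \eqref{xi}, namely $W(X,\M)\ge\xi\hat X$, into the equivalent condition $\hat X^{-1/2}W(X,\M)\hat X^{-1/2}\ge\xi I$. The set of feasible $\xi$ is then the half-line $\{\xi\le\lambda_{\min}(\hat X^{-1/2}W(X,\M)\hat X^{-1/2})\}$, so the maximal feasible value is unique and given by
\[
\xi^*(X)=\lambda_{\min}\bigl(\hat X^{-1/2}W(X,\M)\hat X^{-1/2}\bigr),
\]
which is also the smallest eigenvalue of the Hermitian pencil $W(X,\M)-\xi\,\diag(X,I_m)$; it is $\ge 0$ because $W(X,\M)\ge 0$, and it is $>0$ precisely when $X\in\XWpdpd$.

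For the second assertion I would exploit the factorization directly. Writing $X=T^{\mathsf H}T$ and $\hat T:=\diag(T,I_m)$, we have $\hat X=\hat T^{\mathsf H}\hat T$ and $\diag(X,I_m)=\hat T^{\mathsf H}\diag(I_n,I_m)\hat T$, and the identity \eqref{PH} used to construct the pH model says precisely that
\[
\hat T^{-\mathsf H}\,W(X,\M)\,\hat T^{-1}=\S+\S^{\mathsf H}=W(I_n,\M_T),
\]
\ie this congruence carries the KYP matrix of $(\M,X)$ to the KYP matrix of the transformed model $\M_T$ evaluated at the storage matrix $I_n$. The eigenvalues of $\hat T^{-\mathsf H}W(X,\M)\hat T^{-1}$ coincide with those of the pencil $W(X,\M)-\mu\,\diag(X,I_m)$, whose smallest one is $\xi^*(X)$ by the first part, so $\lambda_{\min}(W(I_n,\M_T))=\xi^*(X)$. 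Since $\M_T$ is port-Hamiltonian with $Q=I_n$, Corollary~\ref{cor:xeqI} applies to it, expressing $\rho_{\M_T}(I_n)$ through $\lambda_{\min}(W(I_n,\M_T))$ and showing that this value is attained simultaneously in the spectral and in the Frobenius norm; combined with the previous identity this yields $\rho_{\M_T}(I_n)=\xi^*(X)$ in both norms. In the degenerate case $\xi^*(X)=0$ the model $\M_T$ lies on the boundary of passivity and the identity holds trivially.

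The individual computations are routine; the one point that I expect to need care is the verification that the maximisation of $\xi$ in \eqref{xi} is genuinely invariant under the state-space transformation $T$, \ie that passing from the pair $(\M,X)$ to $(\M_T,I_n)$ neither enlarges nor shrinks the admissible range of $\xi$. This is exactly what the congruence identity above delivers, provided one also notices that the right-hand side $\diag(X,I_m)$ of \eqref{xi} is carried by $\hat T$ into $\diag(I_n,I_m)$, consistently with $W(X,\M)$ being carried into $W(I_n,\M_T)$. Everything else is bookkeeping together with an appeal to Corollary~\ref{cor:xeqI}; Theorem~\ref{thm:pHoptimal} is available as an independent confirmation that $\rho_{\M_T}(I_n)$ has the right size.
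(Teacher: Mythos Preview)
Your proposal is correct and follows essentially the same route as the paper: both reduce the constrained inequality \eqref{xi} to $W(I,\M_T)\ge\xi I_{n+m}$ via the congruence with $\hat T=\diag(T,I_m)$, identify $\xi^*(X)=\lambda_{\min}W(I,\M_T)$, and then invoke Corollary~\ref{cor:xeqI}. Your presentation is slightly more explicit about uniqueness (through the pencil formulation $W(X,\M)-\xi\,\diag(X,I_m)$) and about why the result is independent of the choice of factor $T$, but the argument is the same.
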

\begin{proof}
Every $X\in\XWpd$ is strictly positive definite, and can thus be factorized as $X=T^{\mathsf{H}}T$ with $\det T\neq 0$. Thus, we can define the transformed system $\M_T=\{TAT^{-1},TB,CT^{-1},D\}$. It is obvious that the passivity LMI $W(X,\M_T)\ge 0$ of the transformed system $\M_T$ is satisfied with $X=I_n$ and that it is related to the passivity LMI $W(X,\M)\ge 0$ of $\M$ via
\[
W(I,\M_T) :=  \left[\begin{array}{cc} T^{-{\mathsf{H}}} & 0 \\ 0 & I_m \end{array}\right] W(X,\M)  \left[\begin{array}{cc} T^{-1} & 0 \\ 0 & I_m \end{array}\right]\ge 0.
\]
It also follows that \eqref{xi} is satisfied if and only if $W(I,\M_T) \ge \xi I_{n+m}$ is satisfied. But the largest value $\xi^*(X)$ of
$\xi$ for which this holds is clearly equal to
\begin{eqnarray}  \nonumber \xi^*(X) & := & \max_\xi\left\{ \;\xi\; | \;  W(X,\M) \ge \xi  \diag(X,I_{m})\right\} \\  \nonumber
& = & \max_\xi\left\{ \;\xi\; | \;  W(I,\M_T) \ge \xi  I_{n+m}\right\} = \lambda_{\min} W(I,\M_T).
\end{eqnarray}
Since state-space transformations do not change the transfer function, it follows that $\M_T$ is a particular pH realization of the transfer function of $\M$
and that
\[
W(I,\M_T) = 2 \left[\begin{array}{cc} R & K^{\mathsf{H}} \\ K & S \end{array}\right]
\]
as in Definition~\ref{def:ph}.
Then, it follows from Corollary~\ref{cor:xeqI} that $\xi^*(X)$ is also equal to the passivity radius $\rho_{\M_T}(I)$
of the port-Hamiltonian system $\M_T:=\{J-R,G-K,G^{\mathsf{H}}+K^{\mathsf{H}},S+N\}$, and this for both the spectral and Frobenius norm.
\end{proof}

We point out that Theorem~\ref{thm:maxoverX} applies to all matrices in $\XWpd$, and therefore also to all matrices in $\XWpdpd$, which can be distinguished as follows.
\begin{corollary} \label{distinction}
The maximal value $\xi^*(X)$ of a matrix $X\in \XWpd$ for a given model $\M$ equals zero if $X$ is a boundary point of $\XWpd$ and is strictly positive if and only if $X$ is in $\XWpdpd$.
\end{corollary}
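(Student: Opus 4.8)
The plan is to read off both claims directly from the identity $\xi^*(X) = \lambda_{\min} W(I,\M_T)$ established in the proof of Theorem~\ref{thm:maxoverX}, combined with the congruence relation $W(I,\M_T) = \diag(T^{-\mathsf{H}},I_m)\, W(X,\M)\, \diag(T^{-1},I_m)$. Since $T$ is invertible, this congruence shows that $W(X,\M)$ and $W(I,\M_T)$ have the same \emph{inertia}; in particular $W(I,\M_T) \ge 0$ always (because $X\in\XWpd$), so $\xi^*(X) = \lambda_{\min}W(I,\M_T) \ge 0$, and $\xi^*(X) > 0$ if and only if $W(I,\M_T) > 0$, which by the same congruence is equivalent to $W(X,\M) > 0$.

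From here I would argue as follows. First I would recall that $\XWpdpd = \{X \in \Hn \mid W(X) > 0,\ X > 0\}$ and $\XWpd = \{X \in \Hn \mid W(X) \ge 0,\ X > 0\}$, so a matrix $X \in \XWpd$ lies in $\XWpdpd$ precisely when $W(X,\M) > 0$. Combining this with the equivalence from the previous paragraph gives immediately that $\xi^*(X) > 0 \iff X \in \XWpdpd$, which is the ``if and only if'' part of the corollary. For the boundary statement, I would note that $\XWpd$ is a closed set (defined by the non-strict inequalities $W(X) \ge 0$ and $X > 0$, the latter being automatically strict on all of $\XWpd$), and its relative interior — the set of $X \in \XWpd$ with $W(X)$ strictly positive definite — is exactly $\XWpdpd$. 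Hence a boundary point $X$ of $\XWpd$ (relative to the affine solution set) is one where $W(X,\M)$ is positive \emph{semi}definite but singular, so $\lambda_{\min}W(X,\M) = 0$, and therefore $\xi^*(X) = \lambda_{\min}W(I,\M_T) = 0$ by the inertia-preserving congruence.

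The one point requiring a little care — and the place I'd expect to spend the most words — is pinning down exactly what ``boundary point of $\XWpd$'' means and verifying that $\XWpd \setminus \XWpdpd$ coincides with that boundary. The subtlety is that $\XWpd$ sits inside the affine subspace $\{X : \mathsf{Ricc}\text{-type constraints}\}$ only as a full-dimensional convex body when one works relative to $\Hn$; the condition $X > 0$ is open, so it contributes no boundary, and the only way to be on the boundary of $\XWpd$ is to have $W(X,\M)$ drop rank. I would make this precise by observing that for $X \in \XWpd$ the map $X \mapsto W(X,\M)$ is affine and $W(X,\M) \ge 0$, so $X$ is in the interior of $\XWpd$ exactly when a small perturbation in every direction keeps $W \ge 0$, which holds iff $W(X,\M) > 0$, i.e. iff $X \in \XWpdpd$. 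Everything else is then a one-line consequence of $\xi^*(X) = \lambda_{\min} W(I,\M_T)$ and Sylvester's law of inertia applied to the congruence by $\diag(T^{-1},I_m)$.

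\begin{proof}
By the proof of Theorem~\ref{thm:maxoverX} we have, for any $X \in \XWpd$ with factorization $X = T^{\mathsf{H}}T$,
\[
\xi^*(X) = \lambda_{\min} W(I,\M_T), \qquad
W(I,\M_T) = \left[\begin{array}{cc} T^{-{\mathsf{H}}} & 0 \\ 0 & I_m \end{array}\right] W(X,\M) \left[\begin{array}{cc} T^{-1} & 0 \\ 0 & I_m \end{array}\right].
\]
Since $\diag(T^{-1},I_m)$ is invertible, Sylvester's law of inertia shows that $W(I,\M_T)$ and $W(X,\M)$ have the same inertia; in particular $W(I,\M_T) \ge 0$ (as $X \in \XWpd$), so $\xi^*(X) = \lambda_{\min} W(I,\M_T) \ge 0$, and $\xi^*(X) > 0$ if and only if $W(I,\M_T) > 0$, equivalently $W(X,\M) > 0$.

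Now the map $X \mapsto W(X,\M)$ is affine on $\Hn$ and takes positive semidefinite values on $\XWpd$, while the constraint $X > 0$ is open. Hence $X \in \XWpd$ is an interior point of $\XWpd$ (relative to $\Hn$) precisely when every sufficiently small perturbation of $X$ keeps $W(\cdot,\M) \ge 0$, which happens if and only if $W(X,\M) > 0$, i.e.\ if and only if $X \in \XWpdpd$ by \eqref{XpdsolWpd}. Consequently the boundary points of $\XWpd$ are exactly those $X \in \XWpd$ with $W(X,\M)$ positive semidefinite but singular, for which $\lambda_{\min} W(X,\M) = 0$ and therefore $\xi^*(X) = \lambda_{\min} W(I,\M_T) = 0$.

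Combining the two paragraphs: $\xi^*(X) = 0$ when $X$ is a boundary point of $\XWpd$, and $\xi^*(X) > 0$ if and only if $W(X,\M) > 0$, which together with $X > 0$ is the defining condition of $\XWpdpd$ in \eqref{XpdsolWpd}. This proves the claim.
\end{proof}
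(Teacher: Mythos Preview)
Your proof is correct. The paper takes a slightly more direct route: rather than going through the identity $\xi^*(X)=\lambda_{\min}W(I,\M_T)$ and invoking Sylvester's law of inertia, it argues straight from the defining inequality $W(X,\M)\ge \xi\,\diag(X,I_m)$. If $X$ is a boundary point then $\det W(X,\M)=0$, so $W(X,\M)-\xi\,\diag(X,I_m)$ cannot be positive semidefinite for any $\xi>0$, giving $\xi^*(X)=0$; if $X\in\XWpdpd$ then $W(X,\M)>0$ and $\diag(X,I_m)>0$, so some $\xi>0$ works; and conversely $\xi^*(X)>0$ forces $W(X,\M)\ge \xi^*(X)\diag(X,I_m)>0$. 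Your detour through the congruence and the transformed model $\M_T$ is a perfectly valid alternative and has the mild advantage of making the link to the pH realization explicit, while the paper's argument is marginally more self-contained since it does not reach back into the proof of Theorem~\ref{thm:maxoverX}. Your extra care in identifying the boundary of $\XWpd$ with the locus $\{\det W(X,\M)=0\}$ is something the paper simply asserts.
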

\begin{proof}
If $X$ is a boundary point of $\XWpd$ then $\det W(X,\M)=0$ and for those $X$, we have $\xi^*(X)=0$.
If $X$ belongs to $\XWpdpd$, then $W(X,\M)>0$ and $\diag(X,I_m)>0$.
Therefore there exists a $\xi>0$ such that $W(X,\M)>\xi\diag(X,I_m)$, and hence $\xi^*(X)>0$. Conversely, if
$\xi^*(X)>0$ then $W(X,\M)>0$ and $X\in \XWpdpd$.
\end{proof}
In order to maximize the passivity radius, it is clear that we need to look at $\XWpdpd$. For a given $X$ in $\XWpdpd$, we therefore consider the passivity LMI $W(X,\M_\xi)$ for the modified model $\M_\xi :=\{A+\frac{\xi}{2} I_n,B,C,D-\frac{\xi}{2} I_m\}$ with a $\xi$ chosen such that
\begin{equation} \label{shifted}
W(X,\M_\xi) := \left[\begin{array}{cc} -(A+\frac{\xi}{2} I_n)^{\mathsf{H}}X-X(A+\frac{\xi}{2} I_n) &   C^{\mathsf{H}}-XB \\    C-B^{\mathsf{H}}X & (D-\frac{\xi}{2} I_m)^{\mathsf{H}}+ (D-\frac{\xi}{2} I_m) \end{array}\right] \ge 0.
\end{equation}
We have the following Lemma.
\begin{lemma} \label{inclusion}
For every $X> 0$ in $\XWpdpd$ and any $0\le \xi_- < \xi_+  \le \xi^*(X)$, the  systems $\M_{\xi_-}$ and $\M_{\xi_+}$ are passive. Moreover, the whole solution set of
$W(X,\M_{\xi_+})\ge 0$ is included in the solution set of  $W(X,\M_{\xi_-})> 0$.
\end{lemma}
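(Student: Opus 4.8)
The plan is to reduce the whole statement to one elementary identity. Writing the LMI variable as $Y$ (to keep it distinct from the fixed matrix $X$ of the hypothesis), I would first record that
\[
W(Y,\M_\xi)=W(Y,\M)-\xi\,\diag(Y,I_m),\qquad Y\in\Hn,\ \xi\in\mathbb{R},
\]
which one reads off from the three blocks of \eqref{shifted}: the shift $A\mapsto A+\frac{\xi}{2}I_n$ changes the $(1,1)$ block into $-A^{\mathsf H}Y-YA-\xi Y$, the off-diagonal blocks $C^{\mathsf H}-YB$ and $C-B^{\mathsf H}Y$ are untouched, and the shift $D\mapsto D-\frac{\xi}{2}I_m$ changes the $(2,2)$ block into $D^{\mathsf H}+D-\xi I_m$. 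Everything then follows by combining this identity with Theorem~\ref{thm:maxoverX} and Corollary~\ref{distinction}.

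Next, I would settle the passivity claim. Since $X\in\XWpdpd$, Corollary~\ref{distinction} gives $\xi^*(X)>0$, so the range $0\le\xi_-<\xi_+\le\xi^*(X)$ is genuinely nonempty. By the characterization of $\xi^*(X)$ furnished by Theorem~\ref{thm:maxoverX}, every $\xi$ with $0\le\xi\le\xi^*(X)$ satisfies $W(X,\M)\ge\xi\,\diag(X,I_m)$, hence $W(X,\M_\xi)\ge 0$ by the identity above; together with $X>0$ this exhibits $X$ as a solution in $\XWpd$ for the model $\M_\xi$. Moreover $\M_\xi=\{A+\frac{\xi}{2}I_n,B,C,D-\frac{\xi}{2}I_m\}$ is again minimal, since a shift of $A$ by a scalar matrix alters neither $\rank[\,sI-A\mid B\,]$ nor $\rank[\,sI-A^{\mathsf H}\mid C^{\mathsf H}\,]$; as the construction in Section~\ref{sec:PH} shows, a minimal model together with a solution in $\XWpd$ yields a port-Hamiltonian realization, which is passive. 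Taking $\xi=\xi_-$ and $\xi=\xi_+$ proves the first assertion.

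For the inclusion I would take an arbitrary $Y>0$ with $W(Y,\M_{\xi_+})\ge 0$. By the identity this reads $W(Y,\M)\ge\xi_+\,\diag(Y,I_m)$, and therefore
\[
W(Y,\M_{\xi_-})=W(Y,\M)-\xi_-\,\diag(Y,I_m)\ \ge\ (\xi_+-\xi_-)\,\diag(Y,I_m).
\]
Since $Y>0$, the matrix $\diag(Y,I_m)$ is positive definite, and $\xi_+-\xi_->0$, so the right-hand side is positive definite; hence $W(Y,\M_{\xi_-})>0$. Thus every solution of $W(X,\M_{\xi_+})\ge 0$ is a solution of $W(X,\M_{\xi_-})>0$, which is the second assertion.

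I expect the only delicate point to be the upgrade from the non-strict inequality $W(Y,\M_{\xi_+})\ge 0$ to the strict inequality $W(Y,\M_{\xi_-})>0$: this works precisely because $\diag(Y,I_m)$ is \emph{strictly} positive definite, which is why the solutions in play must be positive definite (automatic within $\XWpd$, and in any case guaranteed for a minimal realization). Apart from this, the lemma is a bookkeeping consequence of the affine dependence of $W(X,\cdot)$ on the shift parameter $\xi$, so I anticipate no genuine obstacle.
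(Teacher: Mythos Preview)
Your proposal is correct and follows essentially the same route as the paper: both arguments rest on the affine identity $W(Y,\M_\xi)=W(Y,\M)-\xi\,\diag(Y,I_m)$ and deduce the strict inclusion from the positive definiteness of $\diag(Y,I_m)$ and the strict gap $\xi_+-\xi_->0$. The only cosmetic differences are that you introduce the separate letter $Y$ for the LMI variable (avoiding the paper's overload of $X$) and that you justify passivity of $\M_\xi$ via the port-Hamiltonian construction rather than by simply noting that the LMI has a nonempty solution set; neither of these changes the substance of the argument.
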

\begin{proof}
The LMIs for two different values $\xi_-<\xi_+$ are related as
\[
 W(X,\M_{\xi_+}) = W(X,\M_{\xi_-}) - (\xi_+-\xi_-) \diag(X,I_m).
\]
Since $X\in\XWpdpd$, $\xi^*(X)>0$ and $\diag(X,I_m)>0$, it follows that
\begin{equation} \label{ineqs}
W(X,\M) \ge W(X,\M_{\xi_-}) > W(X,\M_{\xi_+})\ge W(X,\M_{\xi^*(X)}) \ge 0.
\end{equation}
Hence, the systems  $\M_{\xi_-}$ and $\M_{\xi_+}$ are passive, since the associated LMIs have a nonempty solution set. Now consider {\em any} $X$ for which $W(X,\M_{\xi_+})\ge 0$. Since $\xi_+$ is strictly positive, so is $\xi^*(X)$ and hence $X\in \XWpdpd$. It then follows from \eqref{ineqs} that $W(X,\xi_-)>0$. Hence,
the solution set  of $W(X,\M_{\xi_+}) \ge 0$ is included in the solution set of $W(X,\M_{\xi_-}) > 0$.
\end{proof}
Lemma~\ref{inclusion} implies that for a given $X\in \XWpdpd$, the solution sets of  $W(X,\M_\xi) \ge 0$ are shrinking with increasing $\xi$.
It remains to find the $\xi^*(X)$ that corresponds to the largest possible passivity radius. We can obtain this value by relating it to the passivity of the transfer function
\[
  \T_\xi(s):=C((s-\xi/2)I_n -A)^{-1}B+(D-\xi I_m/2),
\]
of the modified system $\M_\xi$. Note that we have  assumed that the associated system is minimal, a property which is not changed by the shift.
It follows from the discussion of Section \ref{sec:PH} that this transfer function corresponds to a {\em strictly} passive system if and only if (i) the transfer function  $\T_\xi(s)$ is asymptotically stable and (ii) the matrix function $\Phi_\xi(s):=\T^\mathsf{H}_\xi(-s)+\T_\xi(s)$ is strictly positive on the $\imath \omega$ axis, with $\omega=\infty$ included.  It has been presented in Section \ref{sec:PH} that the zeros of $\Phi_\xi(s)$ are also the eigenvalues of the Hamiltonian matrix
\begin{equation*}
H_\xi :=\left[ \begin{array}{cc} A+\xi I_n/2 & 0 \\ 0 & -(A^{\mathsf{H}}+\xi I_n/2) \end{array}\right] + \left[ \begin{array}{cc}  -B \\ C^{\mathsf{H}} \end{array}
\right] (D^{\mathsf{H}}+D-\xi I_m)^{-1} \left[ \begin{array}{cc} C & B^{\mathsf{H}} \end{array} \right],
\end{equation*}
provided that $D^{\mathsf{H}}+D-\xi I_m > 0$ and the realization of $\M_\xi$ is minimal.

The three algebraic conditions corresponding to strict passivity of $\T_\xi(s)$ are, therefore, given by
\begin{enumerate}
	\item [A1.] $A+\xi I_n/2$ has all its eigenvalues in the open left half plane (stability).
	\item [A2.] $D^{\mathsf{H}}+D-\xi I_m$ has strictly positive eigenvalues (positive-realness at $\omega=\infty$).
	\item [A3.] $H_\xi$ has no eigenvalues on the $\imath\omega$ axis  (positive-realness at finite $\omega$).
\end{enumerate}
All of these conditions are phrased in terms of eigenvalues of certain matrices that depend on the parameter $\xi$. Since eigenvalues are continuous functions of the matrix elements, one can consider limiting cases for the above conditions. As explained in Section \ref{sec:PH}, the passive transfer functions are limiting cases of the strictly passive ones. These limiting cases correspond to the
first value of $\xi$ where one of the three algebraic conditions fails. Note that condition A3. is more robustly expressed in terms of the eigenvalues of the matrix pencil
\begin{equation} \label{statespaceXi}
S_\xi(s) :=
\left[ \begin{array}{cc|c} 0 & A+\xi I_n/2-sI_n & B \\
	A^{\mathsf{H}}+\xi I_n/2+sI_n & 0 & C^{\mathsf{H}} \\ \hline B^{\mathsf{H}} & C & D^{\mathsf{H}}+D-\xi I_m  \end{array} \right].
\end{equation}

It is obvious  that the conditions A1.-A3. are not satisfied anymore for large enough $\xi$. For instance, for $\xi > \lambda_{\min}(D^{\mathsf{H}}+D)$ the second condition fails and $\lambda_{\min}(D^{\mathsf{H}}+D)$ is thus a simple upper bound for $\xi^*(X)$ for any $X$.
\begin{theorem} \label{thm:Xi} Let $\M$ be a strictly passive system. Then there is a bounded supremum $\Xi:=\sup_\xi \{\xi \; | \; \T_\xi(s) \mathrm{\; is \; strictly \; passive}\}$, for which the following properties hold
\begin{enumerate}
\item  $\T_\Xi(s)$ is passive,
\item the solution set of $W(X,\M_\Xi)\ge 0$ is not empty,
\item the solution of $W(X,\M_\Xi)> 0$ is empty,
\item for any $\xi < \Xi$ the solution set of $W(X,\M_\xi)> 0$ is non-empty,
\item  $\Xi:=\sup_X \xi^*(X)$ for all $X\in \XWpd$.
\end{enumerate}
\end{theorem}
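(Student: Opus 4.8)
The plan is to establish the five properties by exploiting the connection, made in the preceding discussion, between strict passivity of $\T_\xi(s)$ and the three algebraic conditions A1.--A3., together with the characterization $\xi^*(X)=\lambda_{\min}W(I,\M_T)$ from Theorem~\ref{thm:maxoverX}. First I would verify that $\Xi$ is a well-defined bounded supremum: since $\M$ is strictly passive, $\xi=0$ is admissible, so the set is nonempty; boundedness follows from the observation already made in the text that condition A2.\ fails once $\xi>\lambda_{\min}(D^{\mathsf{H}}+D)$, so $0\le\Xi\le\lambda_{\min}(D^{\mathsf{H}}+D)$. For property~4, I would argue that the set of admissible $\xi$ is an interval: if $W(X,\M_\xi)>0$ for some $X>0$ then for every $\xi'<\xi$ we have $W(X,\M_{\xi'})=W(X,\M_\xi)+(\xi-\xi')\diag(X,I_m)>0$ by Lemma~\ref{inclusion}'s computation, so $\M_{\xi'}$ is strictly passive; hence every $\xi<\Xi$ is admissible, which is exactly property~4 (using that strict passivity of $\T_\xi$ is equivalent to $W(X,\M_\xi)>0$ having a positive definite solution).

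For properties~1--3 I would use a continuity/compactness argument in $\xi$ at the boundary value $\Xi$. Take a sequence $\xi_k\uparrow\Xi$ with solutions $X_k>0$ of $W(X_k,\M_{\xi_k})\ge 0$ (indeed $>0$); the key point is to show the $X_k$ can be taken in a bounded set so that a convergent subsequence $X_k\to X_\Xi$ exists with $X_\Xi\ge 0$ and $W(X_\Xi,\M_\Xi)\ge 0$, and moreover $X_\Xi>0$. Boundedness and strict positivity of the limit should follow from the fact that, for $\xi<\Xi$, the extremal Riccati solutions $X_-(\xi)\le X\le X_+(\xi)$ bracket the whole solution set, and these extremal solutions vary continuously with $\xi$ on the interval where A1.--A3.\ hold (the Hamiltonian matrix $H_\xi$ has no eigenvalues on the imaginary axis there, so its stable invariant subspace, and hence $X_\pm(\xi)$, depends continuously on $\xi$); passing to the limit gives a bounded interval of solutions at $\xi=\Xi$ with strictly positive endpoints. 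This yields property~2 (solution set of $W(X,\M_\Xi)\ge 0$ nonempty) and, via the equivalence in Section~\ref{sec:PH}, property~1 ($\T_\Xi(s)$ is passive). Property~3 is the complementary statement: if $W(X,\M_\Xi)>0$ had a solution $X>0$, then by the interval argument of the previous paragraph a whole neighborhood $\xi\in(\Xi-\varepsilon,\Xi+\varepsilon)$ would be admissible, contradicting the definition of $\Xi$ as a supremum; so at $\xi=\Xi$ exactly one of A1.--A3.\ fails in the limiting (non-strict) sense, forcing $\det W(X,\M_\Xi)=0$ for every solution.

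Finally, property~5, $\Xi=\sup_X\xi^*(X)$ over $X\in\XWpd$, follows by combining the earlier pieces with Corollary~\ref{distinction}. By Theorem~\ref{thm:maxoverX}, $\xi^*(X)$ is the largest $\xi$ with $W(X,\M_\xi)\ge0$ for that fixed $X$ (equivalently $W(I,\M_{T,\xi})\ge\xi I$, i.e.\ $\lambda_{\min}W(I,\M_T)$ after the shift bookkeeping), and Corollary~\ref{distinction} tells us $\xi^*(X)>0$ precisely for $X\in\XWpdpd$, with $\xi^*(X)=0$ on the boundary of $\XWpd$. For the inequality $\sup_X\xi^*(X)\le\Xi$: if $\xi^*(X)=\xi_0$ then $W(X,\M_{\xi_0})\ge0$, so $\M_{\xi_0}$ is passive, hence $\M_\xi$ is strictly passive for all $\xi<\xi_0$ and thus $\xi_0\le\Xi$. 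For the reverse inequality: by property~2 there is $X_\Xi>0$ with $W(X_\Xi,\M_\Xi)\ge0$, which means $\xi^*(X_\Xi)\ge\Xi$; since also $\xi^*(X_\Xi)\le\Xi$ we get $\xi^*(X_\Xi)=\Xi$, so the supremum is attained and equals $\Xi$. I expect the main obstacle to be the compactness step in properties~1--3: showing the solution sets $\{X>0: W(X,\M_\xi)\ge0\}$ do not escape to infinity or degenerate as $\xi\uparrow\Xi$. This is where one genuinely needs the Riccati/Hamiltonian structure — the continuous dependence of the extremal solutions $X_\pm(\xi)$ on $\xi$ across the interval of strict passivity — rather than a soft LMI argument, and care is needed precisely because at $\xi=\Xi$ one of A1.--A3.\ is on the verge of failing.
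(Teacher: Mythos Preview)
Your arguments for boundedness of $\Xi$, for property~3 (by contradiction), and for property~5 (two inequalities via $\xi^*(X)=\max\{\xi:W(X,\M_\xi)\ge0\}$) are essentially the same as the paper's. Your argument for property~4 is slightly different and in fact a bit cleaner: you deduce it directly from the definition of $\Xi$ and the monotone shift identity, whereas the paper first establishes property~2 and then applies Lemma~\ref{inclusion} with a fixed $X$ in the solution set of $W(X,\M_\Xi)\ge 0$.

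The substantive difference is in properties~1--2. The paper does \emph{not} argue on the $X$ side at all: it simply observes that $\T_\Xi(s)$ is the pointwise limit of the strictly positive-real $\T_\xi(s)$ as $\xi\uparrow\Xi$, hence $\Phi_\Xi(\imath\omega)\ge 0$ for all $\omega$, so $\T_\Xi$ is passive (property~1); property~2 is then immediate from the KYP correspondence of Section~\ref{sec:PH}. This completely sidesteps the compactness issue you identify as the ``main obstacle.'' Your route---taking $X_k$ with $W(X_k,\M_{\xi_k})>0$ and extracting a limit---is workable, but the specific mechanism you propose (continuous dependence of the stable invariant subspace of $H_\xi$) is delicate exactly at $\xi=\Xi$, where $H_\Xi$ may acquire purely imaginary eigenvalues and the stable/antistable splitting can degenerate. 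A safer way to run your compactness argument is to use the nesting from Lemma~\ref{inclusion}: the extremal solutions satisfy $X_-(\xi)\!\uparrow$ and $X_+(\xi)\!\downarrow$ as $\xi$ increases, bounded between $X_-(0)>0$ and $X_+(0)$, so monotone convergence of Hermitian matrices gives a positive-definite limit $X_\Xi$ with $W(X_\Xi,\M_\Xi)\ge 0$. Either way, the paper's transfer-function limit is the shorter path.
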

\begin{proof}
	The existence of a bounded supremum follows from the fact that $\T_\xi(s)$ is strictly passive
	only if $\xi$ is smaller than $\lambda_{\min}(D^{\mathsf{H}}+D)$. Property 1. holds because
	$\T_\Xi(s)$ is the limit of $\T_\xi(s)$ for $\xi \rightarrow \Xi$. Property 2. is a direct consequence of the previous property. Property 3. follows by contradiction, since if $W(X,\M_\Xi)> 0$ would not be empty, then $\xi^*(X)$ for $X$ in the solution set of $W(X,\M_\Xi)> 0$, would be larger than $\Xi$. Property 4. follows from Lemma~\ref{inclusion}, where we use any $X$ in the solution set of $W(X,\M_\Xi)\ge  0$ and choose $\xi_+=(\Xi+\xi)/2$ and $\xi_-=\xi$ to show that $X$ also lies in the solution set of $W(X,\M_\xi)> 0$. Property 5. follows from
	$\xi^*(X)=\max \{ \xi \; | \; W(X,\M_\xi)\ge 0 \}$, which expresses that $\T_\xi(s)$ is passive.
\end{proof}
We now link the value of $\Xi$ in Theorem~\ref{thm:Xi} to the passivity radius of an {\em optimally robust} pH realization.
\begin{theorem} \label{thm:optimal}
	Let $\M:=\{A,B,C,D\}$ be a minimal realization of a strictly passive transfer function $\T(s):= C(sI-A)^{-1}B+D$.
	Then
\[
\Xi:=\sup_\xi \{\xi \; | \; \T_\xi(s) \mathrm{\; is \; strictly \; passive}\}
\]
 is the largest possible passivity radius out of all realizations of this transfer function. The models with such an optimal passivity radius correspond to a solution $X$ of $W(X,\M_\Xi)\ge 0$ and a  pH realization is given by $\M_T:=\{T^{-1}AT,BT,T^{-1}C,D\}$ where $X:=T^\mathsf{H}T$.
\end{theorem}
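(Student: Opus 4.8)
The plan is to assemble the statement from the two pieces already in hand: Theorem~\ref{thm:maxoverX}, which says that for a fixed $X\in\XWpd$ the quantity $\xi^*(X)=\lambda_{\min}W(I,\M_T)$ equals the $I$-passivity radius of the pH realization $\M_T$ with $X=T^\mathsf{H}T$; and Theorem~\ref{thm:Xi}, which identifies $\Xi=\sup_{X\in\XWpd}\xi^*(X)$ and characterizes the maximizer via the LMI $W(X,\M_\Xi)\ge 0$. First I would observe that by Theorem~\ref{thm:pHoptimal} (together with the lemma relating $\rho_\M$ to $\sup_X\rho_\M(X)$), for \emph{any} realization $\widetilde\M$ of $\T(s)$ and any admissible $X$, the $X$-passivity radius of $\widetilde\M$ is bounded above by the $I$-passivity radius of the associated pH realization; hence the supremum of passivity radii over all realizations of $\T(s)$ is attained within the pH class and equals $\sup_{X\in\XWpdpd}\rho_{\M_T}(I)$. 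By Theorem~\ref{thm:maxoverX} each such $\rho_{\M_T}(I)$ equals $\xi^*(X)$, so the supremum over all realizations is exactly $\sup_{X\in\XWpd}\xi^*(X)$, which is $\Xi$ by Theorem~\ref{thm:Xi}, part~5. This establishes the first assertion.

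Next I would argue that the supremum is actually attained, so that one can speak of "the models with such an optimal passivity radius." Here I would invoke Theorem~\ref{thm:Xi}, parts~1 and~2: the limiting transfer function $\T_\Xi(s)$ is passive, and the solution set of $W(X,\M_\Xi)\ge 0$ is nonempty. Picking any such $X$ and writing $\Xi\diag(X,I_m)\le W(X,\M_\Xi)=W(X,\M)-\Xi\diag(X,I_m)$ rearranged as $W(X,\M)\ge\Xi\diag(X,I_m)$, one sees that this $X$ achieves $\xi^*(X)\ge\Xi$, and by part~3 (emptiness of the strict solution set) together with part~5 one gets $\xi^*(X)=\Xi$. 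Since $X\in\XWpd$ and in fact $X\in\XWpdpd$ when $\Xi>0$ (which holds because $\M$ is strictly passive, so $\xi^*(X)>0$ for $X$ in the interior, forcing $\Xi>0$), the factorization $X=T^\mathsf{H}T$ with $\det T\neq0$ exists, and Theorem~\ref{thm:maxoverX} identifies the pH realization $\M_T$ with $I$-passivity radius $\Xi$. I would note the minor bookkeeping point that the statement writes $\M_T=\{T^{-1}AT,BT,T^{-1}C,D\}$ where the earlier sections use $\{TAT^{-1},TB,CT^{-1},D\}$; these amount to the same family up to renaming $T\leftrightarrow T^{-1}$ (and transposing conventions on $BT$ versus $TB$), and I would simply align the notation with the convention of Theorem~\ref{thm:maxoverX}.

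The one genuinely delicate step is making sure the shift $\M_\xi=\{A+\tfrac{\xi}{2}I_n,B,C,D-\tfrac{\xi}{2}I_m\}$ correctly bridges "maximal $\xi$ in the constrained LMI \eqref{xi}" and "passivity radius," i.e.\ that $W(X,\M_\xi)\ge0\iff W(X,\M)\ge\xi\diag(X,I_m)$, which is the identity $W(X,\M_\xi)=W(X,\M)-\xi\diag(X,I_m)$ used in Lemma~\ref{inclusion}; this is a direct computation on the block form \eqref{prls} and I would just cite it. The main obstacle, if any, is the attainment argument: one must be careful that $\Xi$ as a supremum over the parameter $\xi$ really coincides with the supremum over $X$ of $\xi^*(X)$ and that a common maximizing $X$ exists rather than only a maximizing sequence — but this is precisely the content of Theorem~\ref{thm:Xi}, whose parts 1--5 were proved by the continuity/limiting argument there, so for the present theorem it can be quoted wholesale. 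The remainder is then immediate from Theorem~\ref{thm:maxoverX} and Corollary~\ref{cor:xeqI}.
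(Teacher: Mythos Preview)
Your proposal is correct and follows essentially the same route as the paper: invoke Theorem~\ref{thm:maxoverX} to equate $\xi^*(X)$ with the passivity radius of the pH realization $\M_T$, use Theorem~\ref{thm:pHoptimal} to argue that pH realizations dominate arbitrary ones, and then apply Theorem~\ref{thm:Xi} to identify $\sup_X\xi^*(X)=\Xi$ and locate the maximizing $X$ in the solution set of $W(X,\M_\Xi)\ge 0$. Your treatment of attainment is a bit more explicit than the paper's (which is terse on this point), and your flagging of the $T\leftrightarrow T^{-1}$ notational slip in the statement is apt.
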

\begin{proof}
Consider realizations  $\M_T:=\{T^{-1}AT,BT,T^{-1}C,D\}$ with $X:=T^\mathsf{H}T$ and $X \in W(X,\M)\ge 0$. It was shown in Theorem~\ref{thm:maxoverX} that such realizations have passivity radius equal to $\xi^*(X)$ and Theorem~\ref{thm:Xi} shows that the supremum of all $\xi^*(X)$ is precisely $\Xi$.

Let us now consider an arbitrary model $\M$. Then its passivity radius is $\rho_\M=\rho_\M(X)$ for some $X \in \XWpdpd$. 	It follows that the passivity radius of the pH realization $\M_T$ derived from $X=T^\mathsf{H}T$ is larger than or equal to that of $\M$. Moreover, the corresponding passivity radius is $\xi^*(X)$. To complete the proof we point out that the matrices $X$ that maximize $\xi^*(X)$ are in $W(X,\M_\Xi)\ge 0$.
\end{proof}
In this section we have derived a characterization of the passivity radius of a strictly passive system. In the next section we show how this can be computed numerically.
\section{Computing the optimal passivity radius} \label{sec:computing}

In this section we describe algorithms that compute, within a given tolerance
$\tau$, an approximation of the optimal $\Xi$ as in Theorem~\ref{thm:optimal} for a given minimal realization $\M:=\{A,B,C,D\}$ of a passive system.

First of all, if  $\M$ is passive but not strictly passive then $\Xi=0$. If $\M$ is strictly passive, then
simple upper bounds for $\Xi$ are given by the conditions A1. and A2. in Section \ref{sec:maxpass}, i.e.,
\[
 \Xi_{up} = \min \left[  \min_j (-\Re\lambda_j(A)), \min_i\lambda_i(D^\mathsf{H}+D) \right].
\]
The procedure to check passivity is then to verify condition A3. for $0 \le \xi \le \Xi_{up}$ , namely that $S_\xi$, (or $H_\xi$) has no generalized eigenvalues on the imaginary axis.
We therefore first recall some basic properties of the scalar function
\begin{equation}\label{defgamma}
\gamma(\xi,\omega):=\lambda_{\min}\Phi_\xi(\imath\omega), \quad \mathrm{where} \quad
\Phi_\xi(s):= \T_\xi^\mathsf{H}(-s)+ \T_\xi(s).
\end{equation}
\begin{theorem}\label{conthm}
The real function $\gamma(\xi,\omega):=\lambda_{\min}\Phi_\xi(\imath\omega)$ in (\ref{defgamma}) is continuous in the real variables $\xi$ and $\omega$ and it has the following properties. It is positive for all $\omega$ in the interval $\xi\in[0,\Xi)$, it is zero for at least one value of $\omega$ at $\xi=\Xi$, and it is negative in some open interval(s) of $\omega$ whenever $\xi \in (\Xi,\Xi_{up}]$, provided that $\Xi<\Xi_{up}$.
\end{theorem}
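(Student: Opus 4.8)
Throughout, write $\mu:=\min_j(-\Re\lambda_j(A))$ and $\nu:=\lambda_{\min}(D^{\mathsf{H}}+D)$; both are positive because $\M$ is strictly passive, and $\Xi_{up}=\min(\mu,\nu)$. The plan is first to record joint continuity of $\gamma$ on $[0,\Xi_{up}]\times\mathbb{R}$, and then, for each of the three sign statements, to convert the (strict) passivity properties of Theorem~\ref{thm:Xi} into sign information on $\lambda_{\min}\Phi_\xi(\imath\omega)$ by means of the conditions A1--A3 of Section~\ref{sec:maxpass}. For the continuity I would note that for every $\xi\in[0,\Xi_{up}]$ the eigenvalues of $A+\xi I_n/2$ have real part at most $-\mu+\xi/2\le-\mu/2<0$, so $(\imath\omega-\xi/2)I_n-A$ is invertible for all $(\xi,\omega)\in[0,\Xi_{up}]\times\mathbb{R}$. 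Consequently $\T_\xi(\imath\omega)$ and the Hermitian matrix $\Phi_\xi(\imath\omega)$ are jointly real-analytic on this set and extend continuously to $\omega=\pm\infty$ with value $D^{\mathsf{H}}+D-\xi I_m$; since the smallest eigenvalue of a Hermitian matrix depends continuously on the matrix, $\gamma$ is continuous on $[0,\Xi_{up}]\times(\mathbb{R}\cup\{\pm\infty\})$.

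Next, for $\xi\in[0,\Xi)$ the transfer function $\T_\xi$ is strictly passive: by Theorem~\ref{thm:Xi}, item~4, the strict LMI $W(X,\M_\xi)>0$ is solvable, and such a solution is positive definite by a Lyapunov argument applied to the leading block of $W(X,\M_\xi)$ (using that $A+\xi I_n/2$ is stable), so $\XWpdpd$ is nonempty for $\M_\xi$. By the characterisation recalled just before Theorem~\ref{thm:Xi}, strict passivity forces $\Phi_\xi(\imath\omega)$ to be positive definite for all $\omega\in\mathbb{R}$, whence $\gamma(\xi,\omega)>0$ there; this is the first assertion. Letting $\xi\uparrow\Xi$ and using the continuity just proved gives $\gamma(\Xi,\omega)\ge0$ for all $\omega\in\mathbb{R}\cup\{\pm\infty\}$.

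At $\xi=\Xi$, Theorem~\ref{thm:Xi} gives that $\T_\Xi$ is passive (item~1) but not strictly passive (item~3, since $W(X,\M_\Xi)>0$ has no solution). By the equivalence cited above, strict passivity means condition~A1 (stability of $A+\xi I_n/2$, which holds for every $\xi\in[0,\Xi_{up}]$) together with strict positivity of $\Phi_\xi$ on the imaginary axis, $\omega=\infty$ included; hence the failure of strict passivity at $\xi=\Xi$ forces $\Phi_\Xi(\imath\omega)$ to be not positive definite for some $\omega\in\mathbb{R}\cup\{\infty\}$, and since $\Phi_\Xi(\imath\omega)\ge0$ throughout (previous paragraph), such an $\omega$ is a point where $\Phi_\Xi$ is singular, \ie $\gamma(\Xi,\omega)=0$. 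When $\Xi<\Xi_{up}$ one has $\Xi<\nu$, so the value $D^{\mathsf{H}}+D-\Xi I_m$ of $\Phi_\Xi$ at $\omega=\infty$ is positive definite and the singular frequency is therefore finite, which is the second assertion. (If instead $\Xi=\Xi_{up}$, necessarily $\Xi_{up}=\nu$ and $\gamma(\Xi,\cdot)$ already vanishes at $\omega=\infty$.) I expect this to be the main obstacle: it is the only point where the three items of Theorem~\ref{thm:Xi}, the continuity of $\gamma$, and the \emph{strict} validity of A1 and of the $\omega=\infty$ part of the positivity condition at $\xi=\Xi$ — which is exactly what the hypothesis $\Xi<\Xi_{up}$ provides — must all be combined, in order to place the zero of $\gamma$ at a finite frequency and not merely at infinity.

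Finally, take $\xi\in(\Xi,\Xi_{up}]$, so that $\Xi<\Xi_{up}$, and suppose for contradiction that $\gamma(\xi,\omega)\ge0$ for all $\omega\in\mathbb{R}$, \ie $\Phi_\xi(\imath\omega)\ge0$ there; by definition this means $\T_\xi$ is positive-real, so by the discussion of Section~\ref{sec:PH} the LMI $W(X,\M_\xi)\ge0$ has a Hermitian solution $X$. This $X$ is positive definite because $\M_\xi$ is minimal — one checks that $\ker X$ is $A$-invariant and contained in $\ker C$, hence trivial by observability. Using the identity $W(X,\M)=W(X,\M_\xi)+\xi\,\diag(X,I_m)$ and $\xi>0$ we obtain $W(X,\M)>0$, so $X\in\XWpdpd$ and $\xi^{*}(X)\ge\xi>\Xi$, contradicting $\Xi=\sup_{X\in\XWpd}\xi^{*}(X)$ from Theorem~\ref{thm:Xi}, item~5. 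Therefore $\gamma(\xi,\omega_0)<0$ for some $\omega_0\in\mathbb{R}$, and by continuity in $\omega$ the strict inequality $\gamma(\xi,\cdot)<0$ persists on an open interval about $\omega_0$. This is the third assertion and completes the proof.
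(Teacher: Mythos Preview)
Your proof is correct and, for the continuity and the first two sign assertions, follows essentially the same route as the paper: both arguments rest on Theorem~\ref{thm:Xi} together with the continuity of eigenvalues, and your treatment is simply more careful (you make explicit why the resolvent is defined on all of $[0,\Xi_{up}]\times\mathbb{R}$, and you separate the finite-$\omega$ versus $\omega=\infty$ cases at $\xi=\Xi$ using the hypothesis $\Xi<\Xi_{up}$, which the paper handles only implicitly by speaking of imaginary eigenvalues of $S_\Xi$).

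The genuine difference is in the third assertion. The paper argues topologically: once $\gamma(\Xi,\cdot)$ touches zero, the zero-level curve of $\gamma$ in the $(\xi,\omega)$-plane persists for $\xi\in[\Xi,\Xi_{up}]$ because $\gamma(\xi,\pm\infty)=\lambda_{\min}(D^{\mathsf{H}}+D-\xi I_m)$ stays positive there, so by continuity the curve cannot escape. As written this really only yields zeros of $\gamma(\xi,\cdot)$, not the claimed \emph{negativity} on an open interval. Your argument is a direct contradiction via the LMI: if $\gamma(\xi,\cdot)\ge 0$ for some $\xi>\Xi$, positive-realness of $\T_\xi$ gives (through the KYP lemma and minimality) an $X>0$ with $W(X,\M_\xi)\ge0$, and then $W(X,\M)=W(X,\M_\xi)+\xi\,\diag(X,I_m)>0$ forces $\xi^*(X)\ge\xi>\Xi$, contradicting item~5 of Theorem~\ref{thm:Xi}. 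This is both cleaner and actually delivers strict negativity (and hence, by continuity, an open interval), which the paper's sketch does not. One small remark: your justification that the LMI solution satisfies $X>0$ (``$\ker X$ is $A$-invariant and contained in $\ker C$'') is the standard argument but presupposes $X\ge0$; for an arbitrary Hermitian solution one invokes the full positive-real lemma for minimal stable realizations, which is of course the intended result.
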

\begin{figure}[h] \label{Fig:Frplot}
	\centering
	\caption{Three frequency plots for the cases $\xi$ smaller, equal and larger that $\Xi$}
	\includegraphics[height=8cm]{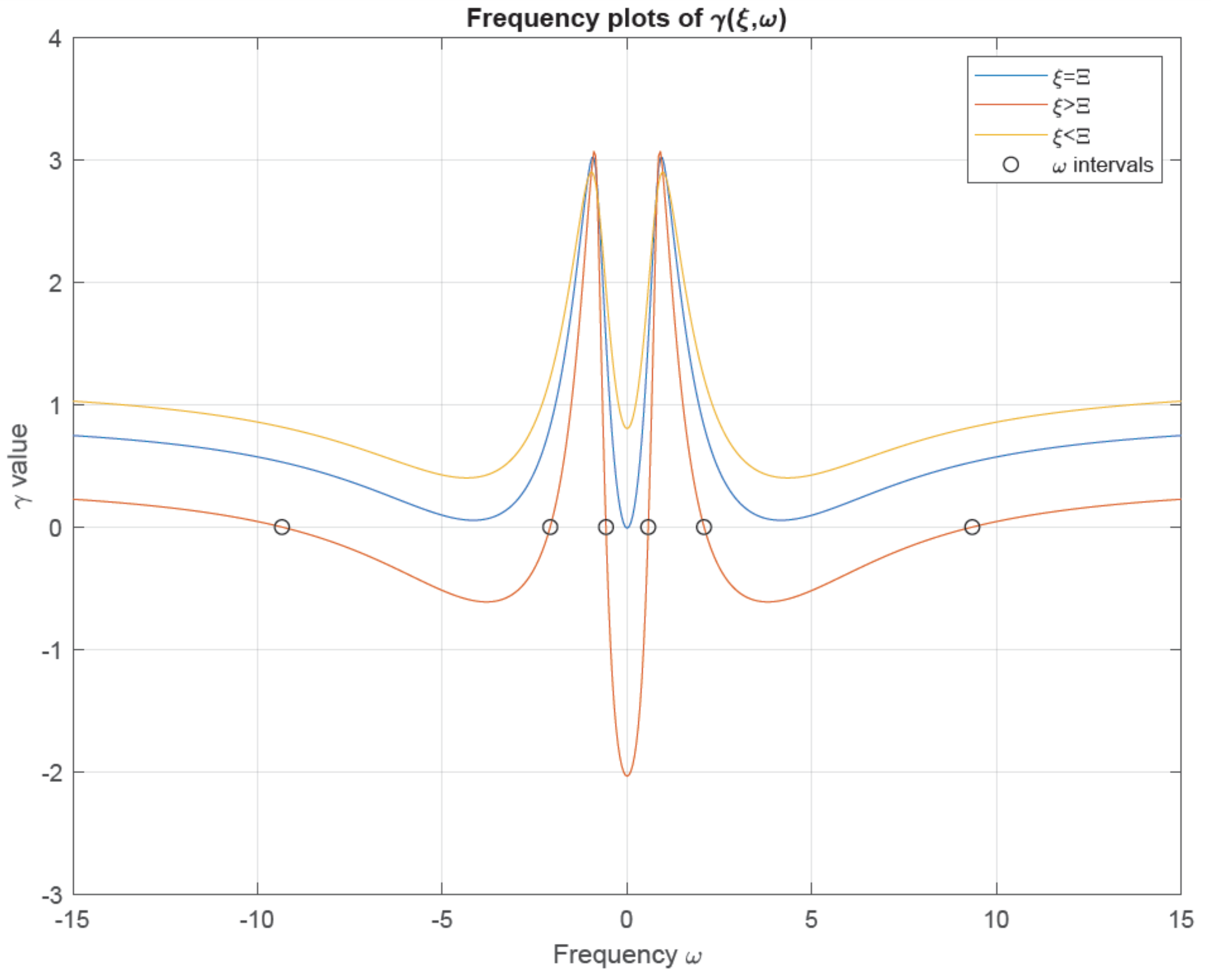}
\end{figure}%
\begin{proof}
	The continuity follows trivially from the fact that eigenvalues of a (Hermitian) matrix are continuous functions of the parameters of the matrix. It is clear that if $\xi<\Xi$ then the transfer function $\Phi(\imath\omega)$ is positive definite for all $\omega$ and so is its smallest eigenvalue $\gamma(\xi,\omega)$. When we increase $\xi$ and reach the limiting value $\Xi$, then the transfer function is passive but not strictly passive anymore, and hence $\gamma(\xi,\omega)$ must loose strict positivity in at least one point $\imath\omega$, \ie $S_\xi$ must must have at least one eigenvalue on the imaginary axis. When we further increase $\xi$,  $S_\xi$ will have more purely imaginary eigenvalues, and there will be purely imaginary eigenvalues all the way up to $\Xi_{up}$ because at $\omega=\pm\infty$ the function eigenvalues of $\Phi_\xi(\pm\infty)$ are those of  $D^\mathsf{H}+D-\xi I_m$. By continuity, $\gamma(\xi,\omega)=0$ must therefore intersect the zero-level for all $\xi\in[\Xi,\Xi_{up}]$.
	Notice also that when $D^\mathsf{H}+D-\xi I_m>0$, then the number of generalized eigenvalues on the imaginary axis is bounded by $2n$, since the pencil $S_\xi$ is then regular and has $m$ infinite generalized eigenvalues.
 	These three different cases are also depicted in Figure \ref{Fig:Frplot}.
\end{proof}	
As a consequence of Theorem~\ref{conthm}, the smallest value of $\xi$ in the interval $[0,\Xi_{up}]$, where condition A3. fails is equal to $\Xi$. (Note that this could be equal to $\Xi_{up}$.) One can then apply bisection to this interval and check the presence of purely imaginary eigenvalues in the above interval. Putting $\Xi_{lo}=0$, we have the following procedure.\\
\\
\emph{\bf Bisection procedure for computing $\Xi$:}
\[
 \xi = (\Xi_{lo} + \Xi_{up})/2, \; \mathrm{if\;} H_\xi \mathrm{\; has \; purely \; imaginary \; eigenvalues\;} \Xi_{up}:=\xi, \mathrm{\; else\; }  \Xi_{lo}:=\xi.
\]
Since the interval for $\Xi$ shrinks by a factor $2$ at each step of the iteration, then in $k=\lceil \log_2(\Xi_{up}/\tau) \rceil$ steps we will have $\Xi_{up}-\Xi_{lo}\leq\tau$.	

One can also make use of the computed eigenvalue decompositions of
\[
S_\xi(\imath \omega)=\left[ \begin{array}{ccc} 0 & A+\xi I_n/2-\imath \omega I_n & B \\
A^{\mathsf{H}}+ \xi I_n/2+\imath \omega I_n & 0 & C^{\mathsf{H}} \\
B^{\mathsf{H}} & C & D^{\mathsf{H}}+D-\xi I_m \end{array} \right],
\]
to construct a method with faster convergence properties.
This approach relies on the fact that $\gamma(\xi,\omega)$ is a continuous
function with smoothness properties at the local minima. It is inspired from a method in \cite{BoyB90} for the computation of the $L_\infty$-norm of a transfer function.

\begin{figure}[h] \label{Fig:Smooth}
	\centering
	\caption{3D illustration of $\gamma(\xi,\omega)$.}
	\includegraphics[height=9cm]{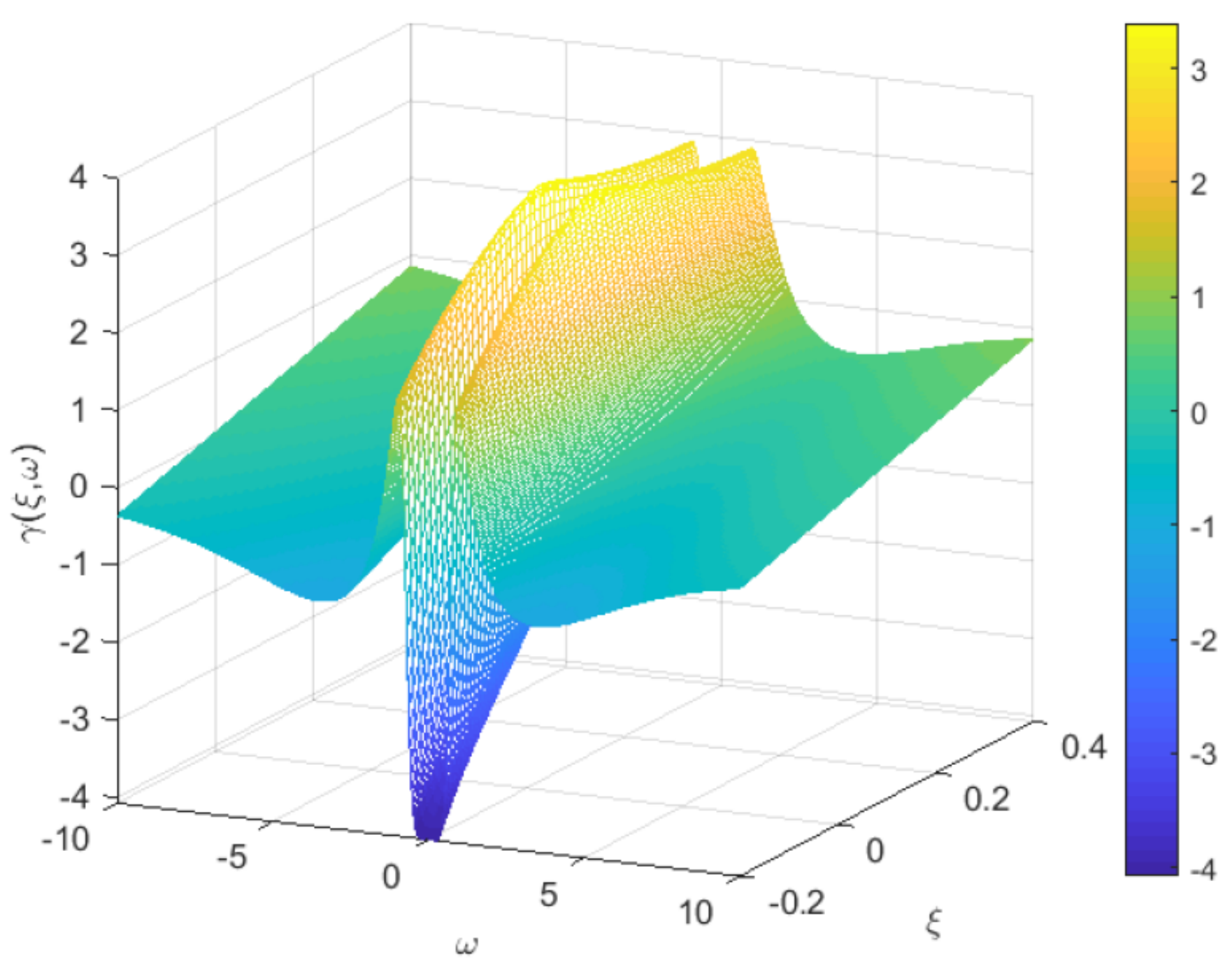}
\end{figure}%
%

Let $\hat \xi \in[\Xi,\Xi_{up}]$, let $\Omega$ be the set of purely imaginary eigenvalues of $S_{\hat\xi}$, let $\partial\Omega$ be the corresponding set of derivatives of these eigenvalues with respect to $\omega$ , and let $\gamma(\hat \xi,\omega)$ be the smallest eigenvalue of $\Phi_{\hat \xi}(\imath\omega)$. Then we exploit the following ideas.
\begin{enumerate}
	\item The real roots $\omega_i$ of $\gamma(\hat \xi,\omega)$ are a subset of $\Omega$. They can be identified by looking at the derivatives in $\partial\Omega$. A detailed description of this selection procedure is described in \cite{SreVT96}.
	\item For a given value of $\hat \xi$, $\gamma(\hat \xi,\omega)$ we can locally approximate $\gamma$ in the neighborhood of a local minimum $\omega^*$ by a quadratic approximation $\gamma^*+(\omega-\omega^*)^2$.
	A detailed description of this result is described in \cite{BoyB90}.
	\item If $\omega_1 < \omega_2$ are two ``consecutive" zeros of $\gamma(\hat\xi,\omega)$ then the smallest real root $\tilde \xi$ of $S_\xi(\hat\omega)$ lies between $0$ and $\hat\xi$ in that interval. This follows from the fact that the smallest eigenvalue $\gamma(\hat \xi,\omega)$ for the current value $\hat \xi$ is negative in that interval of $\omega$. Moreover, at the midpoint $\hat\omega:=(\omega_1+\omega_2)/2$
	the smallest real root $\tilde \xi$ is an improved upper bound for $\Xi$, in the sense that close to $\Xi$, the corresponding error improves quadratically, i.e., $|\Xi-\tilde \xi| \approx |\Xi-\hat \xi|^2$. A detailed description of this result follows from \cite{BoyB90} and the continuity of the two-variable function $\gamma(\xi,\omega)$.
\end{enumerate}

Since we can compute the zeros $\omega_i$ of $\gamma(\hat \xi,\omega)$) for a given value of $\hat\xi$, and we can find the smallest real root $\tilde \xi$ of $S_\xi(\imath \hat \omega)$ for a given value of $\hat \omega$, we can then obtain an algorithm with improved local convergence for computing $\Xi$ based on the above ideas. A detailed analysis of its convergence requires more investigation and will be considered in forthcoming work.

\newpage

\emph{\bf An improved algorithm:}
\begin{enumerate}
\item $\hat \xi := \Xi_{up}-\tau$.
\item Compute the eigenvalues of $S_{\hat\xi}(\imath\omega)$ and select those corresponding to real zeros of $\gamma(\hat \xi,\omega)$.
\item {\bf if} $\gamma(\hat \xi,\omega)$ has no real zeros, {\bf then} $\Xi_{lo}=\hat \xi$, {\bf stop},
\item {\bf else} let $\hat\omega:=(\omega_1+\omega_2)/2$ for the largest interval $[\omega_1,\omega_2]$ of these roots. \\ \hspace*{.7cm} Compute the real eigenvalues $\xi_i$ of
$S_\xi(\imath \hat \omega)$ and update $\Xi_{up} :=\min_i \xi_i$. \\ \hspace*{.7cm} Define the next guess $\hat\xi := \Xi_{up}-\tau$ and go to step 2.\\
\end{enumerate}
This algorithm will typically require only a few iterations and stops with an interval $[\Xi_{lo},\Xi_{up}]$ of size $\tau$. Each step of both, the bisection and the faster converging algorithms has a complexity that is cubic in the matrix dimensions, but they are guaranteed to provide a required accuracy. For large scale problems, this complexity may become a problem, but one can combine it with iterative structured Krylov space techniques like those discussed in \cite{MehSS12} or model reduction based techniques as in \cite{AliBMSV17,AliMM18_ppt}.

\section{The distance to passivity} \label{sec:distance}
In real world applications the system data are usually inaccurate and
therefore only an approximate model is known. Often the real physical problem is nonlinear or a partial differential equation and it is approximated by a finite element or finite difference model \cite{IdaB97}, it may be obtained by a realization or system identification procedure \cite{CoePS99,GusS01,SarAN03},  or  the result of a model reduction procedure \cite{Ant05}. Then it is in general not clear that in the approximation process passivity is preserved. In this situation one approximates the non-passive system by a (hopefully) nearby passive system, by introducing small or minimal perturbations to the model $\M:=\{A,B,C,D\}$, see
\cite{AlaBKMM11,BruS13,CoePS99,FreJ04,Gri04,SarAN03,SchS07}. Therefore, in this section we start with  a system $\M$ that is not passive and ask the question about the smallest perturbation $\Delta_\M$ of the model that makes the system $\M+\Delta_\M$ passive.

It is clear from our previous analysis that to find the smallest perturbation that makes the system passive is equivalent to finding the smallest perturbation $\Delta_\M$ such that the LMI $W(X,\M+\Delta M) \ge 0$ has a Hermitian and positive semidefinite solution $X$. Moreover, if the perturbed system remains minimal, then we expect $X>0$. We recall the notations
\[
\S := \left[ \begin{array}{rr}-A & -B \\ C & D
\end{array}
\right], \quad \mathrm{and} \quad \Delta_\S := \left[ \begin{array}{rr}-\Delta_A & -\Delta_B \\ \Delta_C & \Delta_D
\end{array}
\right].
\]
\begin{definition} The \emph{distance to passivity} of a minimal system $\M\!=\!\{A,B,C,D\}$ is the minimum
norm $\|\Delta_\S\|_{2} $ ($\|\Delta_\S\|_F$) such that there exists a matrix $X>0$ satisfying
\begin{equation} \label{makepassive}
(\S+\Delta_\S)^\mathsf{H}\hat X + \hat X (\S+\Delta_\S) \ge 0, \quad \mathrm{where} \quad \hat X:= \left[ \begin{array}{cc} X & 0 \\ 0 & I_m \end{array} \right].
\end{equation}
\end{definition}
In the following we need an extension of Lemma \ref{inclusion}, for which we consider the LMI for the modified model $\M_{-\xi} :=\{A-\frac{\xi}{2} I_n,B,C,D+\frac{\xi}{2} I_m\}$ with the corresponding transfer function
\begin{equation} \label{Tminusxi}
T_{-\xi}(s) := C((s+\xi/2)I_n -A)^{-1}B+(D+\xi I_m/2)
\end{equation}
and LMI

\begin{eqnarray} \nonumber
W(X,\M_{-\xi}) &:=& \left[\begin{array}{cc} -(A-\frac{\xi}{2} I_n)^{\mathsf{H}}X-X(A-\frac{\xi}{2} I_n) &   C^{\mathsf{H}}-XB \\    C-B^{\mathsf{H}}X & (D+\frac{\xi}{2} I_m)^{\mathsf{H}}+ (D+\frac{\xi}{2} I_m) \end{array}\right] \\
&\ge& 0.\label{shifted2}
\end{eqnarray}
\begin{lemma} \label{inclusion2}
Let $\M:=\{A,B,C,D\}$ be a non-passive system. Then for every $X> 0$ in $\mathbb{H}_n$ there exists a $\xi^*(X)> 0$ such that the LMI \eqref{shifted2} for the system $\M_{-\xi^*(X)}$ holds. Moreover, for every value
$\xi > \xi^*(X)$, the system $\M_{-\xi}$ is passive.
\end{lemma}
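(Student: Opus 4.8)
The plan is to argue that for $X>0$ fixed, the matrix function $W(X,\M_{-\xi})$ is an affine, strictly increasing function of $\xi$ in the positive semidefinite order, and that it becomes positive semidefinite once $\xi$ is large enough. Concretely, using exactly the same bookkeeping as in Lemma~\ref{inclusion}, we have the identity
\[
W(X,\M_{-\xi}) = W(X,\M) + \xi\,\diag(X,I_m),
\]
since shifting $A\mapsto A-\tfrac{\xi}{2}I_n$ adds $\xi X$ to the $(1,1)$ block and shifting $D\mapsto D+\tfrac{\xi}{2}I_m$ adds $\xi I_m$ to the $(2,2)$ block. Because $\diag(X,I_m)>0$, the right-hand side is monotonically increasing in $\xi$, and for $\xi$ large enough it dominates the fixed Hermitian matrix $-W(X,\M)$; explicitly, any $\xi > \lambda_{\max}(-W(X,\M))/\lambda_{\min}(\diag(X,I_m))$ makes $W(X,\M_{-\xi})>0$. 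Hence the set of $\xi\ge 0$ for which $W(X,\M_{-\xi})\ge 0$ is a closed half-line $[\xi^*(X),\infty)$, and I would define $\xi^*(X)$ as its left endpoint, i.e.
\[
\xi^*(X) := \min\{\xi\ge 0 \mid W(X,\M_{-\xi})\ge 0\} = \max\bigl(0,\ -\lambda_{\min}\bigl(\diag(X,I_m)^{-1/2}W(X,\M)\diag(X,I_m)^{-1/2}\bigr)\bigr),
\]
the last expression coming from the substitution that turns $W(X,\M)+\xi\diag(X,I_m)\ge 0$ into a shifted eigenvalue condition after a congruence by $\diag(X,I_m)^{-1/2}$.

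Next I would verify the two assertions of the lemma. For strict positivity of $\xi^*(X)$: since $\M$ is \emph{not} passive, $W(X,\M)\ge 0$ fails for this $X$ (indeed for every $X>0$, since otherwise $\M$ would be passive), so $W(X,\M)$ has a negative eigenvalue, which forces $-\lambda_{\min}(\cdots)>0$ in the displayed formula and hence $\xi^*(X)>0$. For the monotonicity claim — that $\M_{-\xi}$ is passive for every $\xi>\xi^*(X)$ — I would invoke the identity above with two values $\xi^*(X)<\xi$: since $W(X,\M_{-\xi^*(X)})\ge 0$ and $(\xi-\xi^*(X))\diag(X,I_m)>0$, we get $W(X,\M_{-\xi}) = W(X,\M_{-\xi^*(X)}) + (\xi-\xi^*(X))\diag(X,I_m) > 0$, so in particular the LMI for $\M_{-\xi}$ has a positive definite solution, witnessing (strict) passivity of $\M_{-\xi}$.

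The main technical obstacle — really the only subtle point — is justifying that "$\M$ non-passive" lets us conclude $W(X,\M)\not\ge 0$ for the particular $X>0$ at hand. Here I would use the characterization recalled in Section~\ref{sec:PH}: $\M$ is passive if and only if the KYP–LMI $W(X,\M)\ge 0$ admits \emph{some} Hermitian positive (semi)definite solution; so non-passivity means no $X$ in $\XWpd$ works, and in particular the given $X$ does not, i.e.\ $W(X,\M)\ge 0$ is false, which is all that is needed. Everything else is the affine-in-$\xi$ computation plus the elementary fact that a Hermitian pencil $M_0+\xi M_1$ with $M_1>0$ crosses into the positive-definite cone at a unique threshold and stays there. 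I would present the congruence-by-$\diag(X,I_m)^{-1/2}$ step carefully so that $\xi^*(X)$ is manifestly the same quantity appearing in Lemma~\ref{inclusion} and Theorem~\ref{thm:maxoverX} (up to the sign reflecting that we now shift in the opposite direction), which keeps the later "distance to passivity" analysis consistent.
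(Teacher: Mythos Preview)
Your proof is correct and follows essentially the same route as the paper: both rest on the affine identity $W(X,\M_{-\xi})=W(X,\M)+\xi\,\diag(X,I_m)$ (the paper writes $\hat X$ for $\diag(X,I_m)$), the observation that $\diag(X,I_m)>0$ forces positive semidefiniteness for large $\xi$, and the monotonicity $W(X,\M_{-\xi})=W(X,\M_{-\xi^*(X)})+(\xi-\xi^*(X))\diag(X,I_m)$. Your version is in fact more complete: you supply the explicit eigenvalue formula for $\xi^*(X)$ via the congruence by $\diag(X,I_m)^{-1/2}$, and you give the argument for $\xi^*(X)>0$ from non-passivity, a point the paper's proof leaves implicit.
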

\begin{proof}
Clearly we have that $W(X,\M_{-\xi})=W(X,\M) +\xi \hat X$. Since $W(X,\M)$ is bounded from below and $\hat X>0$, the inequality $W(X,\M_{-\xi})\ge 0$ holds for a sufficiently large value of $\xi$. Let $\xi^*(X)$ be the smallest $\xi$ value for which $W(X,\M_{-\xi})\ge 0$ holds, then $W(X,\M_{-\xi})=W(X,\M_{-\xi^*(X)})+(\xi-\xi^*(X))\hat X$, which implies that the passivity condition holds for all $\xi > \xi^*(X)$.
\end{proof}

To find the optimal $\xi$, let us consider first perturbations $\Delta_\S$ that are a multiple of the identity.
\begin{theorem}
The minimum norm perturbation of the type
\begin{equation} \label{restrict}
\Delta_\S= \frac12 \xi I_{n+m}
\end{equation}
that makes system $\M$ passive, has spectral norm $\Xi/2$ and Frobenius norm $\Xi\sqrt{n+m}/2$, where $\Xi$ is the minimum value of $\xi$ such that the model $\M_{-\xi}:=\{A-\xi I_n/2,B,C,D+\xi I_m/2\}$
with transfer function $T_{-\xi}(s)$ is passive.
\end{theorem}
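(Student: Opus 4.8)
The plan is to identify the restricted perturbation \eqref{restrict} with the shift that produces the modified model $\M_{-\xi}$, and then to read off the answer from the monotonicity already recorded in Lemma~\ref{inclusion2} together with the definition of $\Xi$.

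First I would unwind the block structure. Comparing $\Delta_\S=\frac12\xi I_{n+m}$ with $\Delta_\S=\matr{cc}-\Delta_A & -\Delta_B \\ \Delta_C & \Delta_D\rix$ forces $\Delta_A=-\frac{\xi}{2}I_n$, $\Delta_B=0$, $\Delta_C=0$ and $\Delta_D=\frac{\xi}{2}I_m$, so that $\M+\Delta_\M=\{A-\tfrac{\xi}{2}I_n,\,B,\,C,\,D+\tfrac{\xi}{2}I_m\}=\M_{-\xi}$. Since for the block form of $\S$ used in \eqref{makepassive} one has $\S^{\mathsf{H}}\hat X+\hat X\S=W(X,\M)$, the constraint in the definition of the distance to passivity, namely that some $X>0$ satisfy $(\S+\Delta_\S)^{\mathsf{H}}\hat X+\hat X(\S+\Delta_\S)\ge 0$, is precisely the passivity LMI $W(X,\M_{-\xi})\ge 0$. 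As the shift does not destroy minimality, a perturbation of the form \eqref{restrict} makes $\M$ passive if and only if $\M_{-\xi}$ is passive.

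Next I would determine for which real $\xi$ this holds. From $W(X,\M_{-\xi})=W(X,\M)+\xi\hat X$ with $\hat X=\diag(X,I_m)>0$, passivity of $\M_{-\xi}$ is monotone in $\xi$: if $W(X,\M_{-\xi_0})\ge 0$ for some $X>0$, then $W(X,\M_{-\xi})\ge 0$ for every $\xi\ge\xi_0$. Hence the set $\{\xi\in\mathbb R:\M_{-\xi}\text{ is passive}\}$ is nonempty (Lemma~\ref{inclusion2}), up-closed, and — passivity being a closed condition, consistently with the limiting-case discussion of Section~\ref{sec:PH} — attains its infimum, which by definition is $\Xi$; thus it equals $[\Xi,\infty)$. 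Moreover $\Xi>0$: the value $\xi=0$ is excluded because $\M_{-0}=\M$ is not passive, and no $\xi<0$ can be admissible, for otherwise monotonicity would force $\M$ itself to be passive.

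Finally I would minimize the perturbation size over this interval. For $\Delta_\S=\frac12\xi I_{n+m}$ one has $\|\Delta_\S\|_2=|\xi|/2$ and $\|\Delta_\S\|_F=|\xi|\sqrt{n+m}/2$, both strictly increasing in $|\xi|$; on the admissible set $\xi\ge\Xi>0$ they are therefore minimized at $\xi=\Xi$, which gives spectral norm $\Xi/2$ and Frobenius norm $\Xi\sqrt{n+m}/2$, as claimed. I do not expect any real difficulty here; the only points that need a moment's care are the sign convention hidden in the definition of $\Delta_\S$ and the automatic exclusion of negative $\xi$, both of which follow immediately from the monotonicity in Lemma~\ref{inclusion2}.
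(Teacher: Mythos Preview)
Your proposal is correct and follows essentially the same approach as the paper's own proof: identify the restricted perturbation with the shifted model $\M_{-\xi}$ via \eqref{makepassive}, invoke Lemma~\ref{inclusion2} for existence and monotonicity, and read off the minimal $\xi$ as $\Xi$. You are in fact more explicit than the paper on several points the paper leaves implicit --- the block identification $\M+\Delta_\M=\M_{-\xi}$, the exclusion of $\xi<0$, and the final computation of $\|\tfrac12\Xi I_{n+m}\|_2$ and $\|\tfrac12\Xi I_{n+m}\|_F$ --- so nothing needs to be added.
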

\begin{proof}
It follows from \eqref{makepassive} that $\xi$ must satisfy
\begin{equation} \label{minusxi}
(\S+\frac12 \xi I_{n+m})^\mathsf{H}\hat X + \hat X (\S+\frac12 \xi I_{n+m}) \ge 0, \quad \mathrm{where} \quad \hat X:= \left[ \begin{array}{cc} X & 0 \\ 0 & I_m \end{array} \right],
\end{equation}
for some $X>0$. By Lemma~\ref{inclusion2} there exists a bounded minimal solution, which we call $\Xi$. The state-space model corresponding to $\S+\frac12 \xi I_{n+m}$ is $\M_{-\xi}$
with transfer function \eqref{Tminusxi}. Therefore, $\Xi$ is the
smallest value of $\xi$ that makes the model $\M_{-\xi}$ with transfer function $T_{-\xi}(s)$ become passive. We can then choose $X>0$ from the solution set of $W(X,\M_{-\Xi})\ge 0$ to satisfy \eqref{makepassive}.
\end{proof}
The minimal value $\Xi$ of $\xi$ for the restricted class of perturbations being a multiple of the identity can then be computed with the algorithms described in the previous section.

Since, most likely,  the perturbation will not lead to  \eqref{makepassive} being an equality, we can  reduce the Frobenius norm of the perturbation $\Delta_\S$ by considering more general perturbations than
\eqref{restrict}.
In order to do that, we use a matrix $X$ from the set $W(X,\M_{-\Xi})\ge 0$, where $\Xi$ was obtained from perturbations as in \eqref{restrict}. We use the factorization $X=T^\mathsf{H}T$, to transform the system $\M$ to the pH form $\M_T=\{TAT^{-1},TB,CT^{-1},D\}:=\{J-R,G-K,G^\mathsf{H}+K^\mathsf{H},S+N\}$. If we denote the transformed matrix $\S$ as $\hat \S:=\hat T\S\hat T^{-1}$, where $\hat T=\diag(T,I_m)$, then it follows that
\[
  \frac12(\hat \S^\mathsf{H}+\hat\S)=:\hat {\mathcal R}=\left[ \begin{array}{cc} R & K \\ K^\mathsf{H} & S \end{array} \right] \ge -\Xi I_{n+m},
\]
since $\M_{-\Xi}$ is passive. The smallest eigenvalue of $\frac12(\hat \S^\mathsf{H}+\hat\S)$ is greater or equal than $-\Xi $, but the other eigenvalues may be larger or even positive.

To obtain a solution for the minimum Frobenius norm problem
\[
 \min_{\Delta_\S}\{\|\Delta_\S\|_F \; | \; \frac12[(\hat T\Delta_\S\hat T^{-1})^\mathsf{H}+\hat T\Delta_\S\hat T^{-1}] + \left[ \begin{array}{cc} R & K \\ K^\mathsf{H} & S \end{array} \right]\ge 0\},
 \]
let $\hat T=\hat U^T \hat \Sigma \hat V $ be a singular value decomposition with $\hat \Sigma$ diagonal and $\hat U,\hat V$ unitary. Then we can alternatively study the problem
\begin{equation} \label{alternative}
\min_{\Delta_{\tilde \S}}\{\|\Delta_{\tilde \S}\|_F \; | \; \frac12[(\hat \Sigma \Delta_{\tilde \S}\hat \Sigma^{-1})^\mathsf{H}+\hat \Sigma \Delta_{\tilde \S}\hat \Sigma^{-1}] + \tilde {\mathcal R}\ge 0\},
\end{equation}
where $\Delta_{\tilde \S}:=\hat V \Delta_\S \hat V^H$ and $\tilde {\mathcal R}:= \hat U \hat {\mathcal R} \hat U^\mathsf{H}$.

For the given $\tilde {\mathcal R}$ we have a unitary spectral decomposition
\[\tilde U^\mathsf{H} \tilde{\mathcal R} \tilde U=\diag (D_1, 0)-\diag (0,D_2),
\]
where $D_1$ is diagonal with positive diagonal elements and $D_2$ is diagonal with nonnegative diagonal elements. Then $\Delta \tilde {\mathcal R} =\tilde U^\mathsf{H} \diag(0, D_2) \tilde U$ is the minimum Frobenius norm perturbation so that $\tilde {\mathcal R}+\Delta \tilde {\mathcal R}$ is positive semidefinite. We can then replace the optimization problem \eqref{alternative} by the
simpler problem
\begin{equation} \label{simpler}
\min_{\Delta_{\tilde \S}}\{\|\Delta_{\tilde \S}\|_F \; | \; \frac12[(\hat \Sigma \Delta_{\tilde \S}\hat \Sigma^{-1})^\mathsf{H}+\hat \Sigma \Delta_{\tilde \S}\hat \Sigma^{-1}] =\Delta \tilde {\mathcal R}\}.
\end{equation}
Indeed, let $\Delta F$ be the strictly lower triangular part plus $\frac 12$ the diagonal part of
$\Delta \tilde {\mathcal R}$. Then clearly $\hat \Sigma^{-1} \Delta F \hat \Sigma$ is the optimal Frobenius norm solution to \eqref{simpler}.

It should be pointed out that we could have used another matrix $X$ from the domain of $W(X,\M_{-\Xi})\ge 0$.
The matrices $R$, $K$ and $S$ will then change and the minimum Frobenius norm solution will be affected as well. Note that if we choose a matrix $X$ that does {\em not} belong to the solution set of $W(X,\M_{-\Xi})\ge 0$, then because of Lemma \ref{inclusion2}, the value of the most negative eigenvalue in $\Lambda$ will be equal to $-\xi^*(X)$, but it follows that $\xi^*(X) > \Xi$. It is therefore unlikely that the Frobenius norm of the constructed minimum norm  perturbation will be smaller.

\section{The distance to stability} \label{sec:stability}

We can employ similar arguments as in the previous section for the computation of the smallest perturbation that makes a given system stable, see \cite{GilMS18,GilS17,OrbNV13} for other approaches. When searching for the smallest perturbation we only have to study the $(1,1)$ block of the LMI (\ref{KYP-LMI}), i.e. the case where the matrices $B$, $C$ and $D$ are void, or where $m=0$. In this case we write the LMI (\ref{KYP-LMI}) as
\begin{equation} \label{stabmodel}
W(X,A)= -A^\mathsf{H}X-XA = 2R \geq 0
\end{equation}
while the shifted LMI takes the form
\begin{equation} \label{stabLMI}    W_{\xi}(X,A):= W(X,A)-\xi X = -(A+ \xi I_n/2)^\mathsf{H}X-X(A+\xi I_n/2) = 2R-\xi X \ge 0.
\end{equation}

For any $X$ in the solution set of $W(X,A) \ge 0$, the shifted LMI \eqref{stabLMI} has a nonempty solution set as long as $\xi$ is smaller than $\xi^*(X)$.
Instead of the three conditions A1.--A3. for passivity, we only have one condition, namely that the Hamiltonian matrix
\begin{equation} \label{stabHamiltonian}   H_\xi :=\left[ \begin{array}{cc} A+\xi I_n/2 & 0 \\ 0 & -(A^{\mathsf{H}}+\xi I_n/2) \end{array}\right] =
H + \xi/2\left[ \begin{array}{cc} I_n & 0 \\ 0 & -I_n \end{array}\right]
\end{equation}
has no purely imaginary eigenvalue for all $0 \le \xi < \xi^*(X)$. So if $\Xi$ is the largest value of $\xi^*(X)$ over all
$X$ in the solution set of $W(X,A)\ge 0$, then the solution set of  $W_\Xi(X,A)\ge 0$ is not empty, but it has an empty interior.

A well-known formula for the \emph{stability radius} $\rho_A$ of a given matrix $A$ is given by
\[
    \rho_A = \min_{\omega\in \mathbb{R}} \sigma_{\min}(A-\imath \omega I_n).
\]
The perturbation that achieves the minimum is constructed from the smallest singular value $\sigma_{\min}$ and corresponding singular vectors $u$ and $v$ of $A-\imath\omega I_n$ at the minimizing value of $\omega$~:
\[
 (A-\imath \omega I_n)v=\sigma_{\min}u, \quad \Delta_A := \sigma_{\min}uv^\mathsf{H}.
\]
A similar result is obtained using the analysis of Section~\ref{sec:passrad}. This encourages us to also use the distance to passivity results to derive bounds for the \emph{distance to stability}.

Let us first look at the solution of the problem for a specific diagonal form of the perturbation. Given an
unstable matrix $A$ we study the question what is the smallest perturbation of the type $\Delta_A:=-\frac12\xi I_n$ that makes the matrix stable. Following a similar analysis as in Section \ref{sec:distance} we can prove the following theorem.
\begin{theorem}
The minimum norm solution of the form
\begin{equation} \label{restrictA}
\Delta_A= -\frac12 \xi I_{n}
\end{equation}
for the stabilization problem of a matrix $A$, has spectral norm $\Xi/2$ and Frobenius norm $\Xi\sqrt{n}/2$, where $\Xi$ is the minimum value of $\xi$ such that the matrix $A_{-\xi}:=A-\xi I_n/2$, is stable.
\end{theorem}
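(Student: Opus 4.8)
The plan is to mirror exactly the proof structure used earlier for the ``distance to passivity'' restricted perturbation (the theorem with perturbation $\Delta_\S = \tfrac12\xi I_{n+m}$), but now specialized to the $(1,1)$-block-only situation described at the start of Section~\ref{sec:stability}, i.e.\ $m=0$, where the LMI \eqref{KYP-LMI} reduces to \eqref{stabmodel} and the shifted LMI to \eqref{stabLMI}. First I would observe that a perturbation of the form $\Delta_A = -\tfrac12\xi I_n$ transforms $A$ into $A_{-\xi} = A - \tfrac12\xi I_n$, and that finding the minimum-norm such perturbation making $A$ stable amounts to finding the minimum $\xi$ for which $A_{-\xi}$ is stable.

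Next I would invoke the Lyapunov characterization: $A_{-\xi}$ is stable if and only if there exists $X>0$ with $W_\xi(X,A) = -(A+\tfrac{-\xi}{2}I_n)^{\mathsf H}X - X(A+\tfrac{-\xi}{2}I_n) \ge 0$ having a strictly positive solution, which is precisely the $m=0$ analogue of the LMI-feasibility reformulation used throughout. Then I would apply Lemma~\ref{inclusion2} in its $m=0$ form (which holds verbatim, with $\hat X = X$ since $I_m$ is void): for every $X>0$ there is a finite $\xi^*(X)>0$ such that $W(X,A) + \xi X \ge 0$ precisely for $\xi\ge\xi^*(X)$, because $W(X,A) = -A^{\mathsf H}X-XA$ is bounded below and $X>0$. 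Taking the infimum over all $X>0$ (equivalently, the threshold value where the Hamiltonian $H_\xi$ of \eqref{stabHamiltonian} first acquires a purely imaginary eigenvalue) gives the well-defined minimal value $\Xi$ such that $A_{-\Xi}$ is stable (in the closure sense), and by the same continuity argument as in Theorem~\ref{conthm}, $\Xi$ is exactly the smallest $\xi$ making $A-\xi I_n/2$ stable.

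Finally I would compute the two norms of the optimal perturbation directly: for $\Delta_A = -\tfrac12\Xi I_n$ we have $\|\Delta_A\|_2 = \tfrac12\Xi$ since it is $\tfrac{\Xi}{2}$ times an orthogonal projection (all singular values equal $\tfrac{\Xi}{2}$), and $\|\Delta_A\|_F = \tfrac{\Xi}{2}\sqrt{n}$ since $\|I_n\|_F = \sqrt n$. These are the claimed values, and minimality follows because any smaller $\xi$ leaves $A_{-\xi}$ unstable, hence leaves \eqref{stabLMI} infeasible for every $X>0$.

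The main obstacle is not any hard estimate but rather ensuring that the $m=0$ specializations of Lemma~\ref{inclusion2} and of the continuity/threshold argument from Theorem~\ref{conthm} genuinely go through without the input/output structure; in particular one must check that ``$A_{-\Xi}$ is stable'' is interpreted correctly at the boundary (eigenvalues on the imaginary axis, so marginally stable in the limit, with strict stability for every $\xi>\Xi$) and that this boundary value, and not something strictly larger, is the infimum of $\xi^*(X)$ over $X>0$. Once that correspondence is pinned down, the norm computations are immediate.
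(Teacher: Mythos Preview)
Your proposal is correct and follows essentially the same route as the paper: reduce to the Lyapunov LMI, invoke Lemma~\ref{inclusion2} (specialized to $m=0$) to obtain a bounded minimal $\Xi$, identify $\Xi$ via the purely imaginary eigenvalues of the Hamiltonian $H_{-\xi}$, and note that $A_{-\Xi}$ is stable but not asymptotically stable. You are in fact slightly more explicit than the paper about the norm computations and the boundary interpretation, but the argument is the same.
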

\begin{proof}
Clearly $\xi$ must satisfy
\begin{equation} \label{minusxiA}
-(A-\frac12 \xi I_{n})^\mathsf{H} X - X (A-\frac12 \xi I_{n}) \ge 0
\end{equation}
for some $X>0$. It follows from Lemma \ref{inclusion2} that there exists a bounded minimal solution, which we call $\Xi$.
The matrix corresponding to $A_{-\xi}:=A-\frac12 \xi I_{n}$ has the associated Hamiltonian matrix
\[
 H_{-\xi} := \left[ \begin{array}{cc} A-\xi I_n/2 & 0 \\ 0 & -(A^{\mathsf{H}}-\xi I_n/2) \end{array}\right].
\]
Let $\Xi$ be the smallest value such that $H_{-\xi}$ has no purely imaginary eigenvalues for $\xi <\Xi$.
Then the matrix $A_{-\Xi}$ is stable (but not asymptotically stable) and we can then choose $X>0$ from the solution set of $W(X,A_{-\Xi})\ge 0$ to satisfy \eqref{minusxiA}.
\end{proof}

Since we made some of the eigenvalues of the LMI in \eqref{minusxiA} strictly positive, rather than non-negative,
we can further reduce the Frobenius norm of the perturbation $\Delta_A$ when removing the imposed restriction of a diagonal perturbation.
In order to do that, we use a matrix $X$ from the set $W(X,A_{-\Xi})\ge 0$, where $\Xi$ was obtained from the diagonal perturbation.
Using its factorization $X=T^\mathsf{H}T$, we transform $A$ to $A_T=TAT^{-1}:=J-R$, which satisfies
\[  -\frac12(A_T^\mathsf{H}+A_T)= R \ge -\Xi I_{n},
\]
since $A_{-\Xi}$ is stable. We know that the smallest eigenvalue equals $-\Xi $, but the others may be larger or even positive.
In order to construct a nearly optimal solution to this, we can again use the two stage procedure and the perturbation result derived in the previous section.

\section{Conclusion}\label{sec:concl}

We have presented analytic formulas and numerical methods to construct optimally robust port-Hamiltonian realizations of a given transfer function of a linear time invariant passive system. We have shown how to use shifted linear matrix inequalities to achieve this goal. The techniques can also be applied to compute a nearby passive system to a given non-passive one or a nearby stable system to a given unstable one.

\bibliographystyle{siamplain}
%

\end{document}